\documentclass[10pt,reqno]{article}

\usepackage{amsmath, amsthm, amscd, amsfonts, amssymb, graphicx, color, mathrsfs,mathtools}
\usepackage{lineno}
\usepackage{authblk}
\usepackage{dsfont}
\usepackage[bookmarksnumbered, colorlinks, plainpages]{hyperref}
\hypersetup{colorlinks=true,linkcolor=red, anchorcolor=green, citecolor=cyan, urlcolor=red, filecolor=magenta, pdftoolbar=true}
\modulolinenumbers[5]

\textheight 22truecm \textwidth 15.5truecm
\setlength{\oddsidemargin}{0.35in}\setlength{\evensidemargin}{0.35in}

\setlength{\topmargin}{-.5cm}

\newtheorem{thm}{Theorem}[section]
\newtheorem{lemma}[thm]{Lemma}
\newtheorem{pro}[thm]{Proposition}

\theoremstyle{definition}
\newtheorem{defn}[thm]{Definition}
\newtheorem{example}[thm]{Example}

\newtheorem{hyp}[thm]{Hypothesis}

\theoremstyle{remark}
\newtheorem{rmk}[thm]{Remark}

\numberwithin{equation}{section}

\renewcommand{\hat}[1]{\widehat{#1}}
\renewcommand{\tilde}[1]{\widetilde{#1}}


\newcommand{\N}{\mathbb{N}}

\newcommand{\R}{\mathbb{R}}
\newcommand{\C}{\mathbb{C}}
\newcommand{\elle}{\operatorname{L}}


\newcommand{\fcon}{\operatorname{\mathscr{FC}}}

\DeclareMathOperator{\sign}{\operatorname{sign}}

\title{Analyticity of nonsymmetric Ornstein-Uhlenbeck semigroup with respect to a weighted Gaussian measure}

\author{{D. Addona}
\thanks{email: davide.addona@unimib.it}}
\affil{Department of Mathematics and applications\\
University of Milano Bicocca\\
via Cozzi 55, 20125 Milano, Italy}
\date{}
\providecommand{\keywords}[1]{{\textit{Keywords}:} #1}
\providecommand{\subjclass}[1]{{\textit{SubjClass}[2000]:} #1}

\begin{document}

\frenchspacing

\maketitle 

\begin{abstract}
In this paper we show that the realization in $\elle^p(X,\nu_\infty)$ of a nonsymmetric Ornstein-Uhlenbeck operator $L_p$ is sectorial for any $p\in(1,+\infty)$ and we provide an explicit sector of analyticity. Here, $(X,\mu_\infty,H_\infty)$ is an abstract Wiener space, i.e., $X$ is a separable Banach space, $\mu_\infty$ is a centred non degenerate Gaussian measure on $X$ and $H_\infty$ is the associated Cameron-Martin space. Further, $\nu_\infty$ is a weighted Gaussian measure, that is, $\nu_\infty=e^{-U}\mu_\infty$ where $U$ is a convex function which satisfies some minimal conditions. Our results strongly rely on the theory of nonsymmetric Dirichlet forms and on the divergence form of the realization of $L_2$ in $\elle^2(X,\nu_\infty)$.
\end{abstract}

\vspace{5mm}
\keywords{{
Infinite dimensional analysis; Wiener spaces; analytic semigroups; Ornstein-Uhlenbeck operators; numerical range theorem }}

\vspace{2mm}
\subjclass{Primary: 47D07; Secondary: 46G05, 47B32}

\section{Introduction}
In this paper we prove that the realization in $\elle^p(X,\nu_\infty)$ of the nonsymmetric perturbed Ornstein-Uhlenbeck operator $L_p$ defined on smooth functions $f$ by
\begin{align}
\label{pert_O-U}
L_pf(x)=\frac12{\rm Tr}[D^2f(x)]_H+\langle x,A^*Df(x)\rangle_{X\times X^*}+[BD_Hf(x),D_HU(x)]_H, \quad x\in X,
\end{align}
where $U$ is a suitable function (see Hypothesis \ref{ventola}), is sectorial in $\elle^2(X,\nu_\infty)$ and we provide an explicit sector of analyticity.

In finite dimension, the Ornstein-Uhlenbeck operator is the uniformly elliptic second order differential operator $\mathcal L$ defined on smooth functions $\varphi$ by
\begin{align*}
\mathcal L\varphi(\xi)=\sum_{i,j=1}^n q_{ij} D^2_{ij}\varphi(\xi)+\sum_{i,j=1}^na_{ij}\xi_jD_i\varphi(\xi), \quad \xi\in \R^n,
\end{align*}
where $Q=(q_{ij})_{i,j=1}^n$ is a positive definite matrix and $A=(a_{ij})_{i,j=1}^n$. It is well known (see \cite{M01,MP04}) that $\mathcal L$ may fail to generate an analytic semigroup on $\elle^p(\R^n)$. 
The additional assumption $\sigma(A)\subseteq\{z\in\C:{\rm Re}z<0\}$ implies that the integral
\begin{align*}
Q_\infty:=\int_0^{+\infty}e^{tA}Qe^{tA^*}dt,
\end{align*}
is well defined. The centred Gaussian measure $\mu_\infty$ with covariance $Q_\infty$ is an invariant measure for $\mathcal L$, i.e.,  
\begin{align*}
\int_{\R^n}\mathcal Lf d\mu_\infty =0, \quad f\in D(\mathcal L).
\end{align*}
$\mathcal L$ behaves well in $\elle^p(\R^n,\mu_\infty)$. Indeed, the realization $\mathcal L_p$ of $\mathcal L$ in $\elle^p(\R^n,\mu_\infty)$ generates an analytic semigroup for any $p\in(1,+\infty)$. In \cite{CFMP05} the authors explicitly provide a sector 
\begin{align}
\label{sector_p}
\Sigma_{\theta_p}:=\{re^{i\phi}\in\C:\ r>0, \ |\phi|\leq \theta_p\},
\end{align}
where $\theta_p\in(0,\pi/2)$ is an angle which depends on $Q,A$ and $p$, such that $\mathcal L_p$ is sectorial in $\Sigma_{\theta_p}$. This sector is optimal, in the sense that if $\theta\in(0,\pi/2)$ is an angle such that $\mathcal L_p$ is sectorial in $\Sigma_\theta$, then $\theta\leq \theta_p$. In \cite{CFMP06} the same authors extend this result to nonsymmetric sub-Markovian semigroups.

In infinite dimension the situation is much more complicated. We consider an abstract Wiener spaces $(X,\mu_\infty,H_\infty)$, where $X$ is a separable Banach space, $\mu_\infty$ is a centred nondegenerate Gaussian measure on $X$ and $H_\infty$ is the associated Cameron-Martin space (see e.g. \cite{Bog98}). It is well known that $H_\infty\subseteq X$ is a Hilbert space with inner product $[\cdot,\cdot]_{H_\infty}$. Let us denote by $Q_\infty:X^*\rightarrow X$ the covariance operator of $\mu_\infty$. In this setting, the definition of the Ornstein-Uhlenbeck operator can be given in terms of bilinear forms: given $f,g\in C_b^1(X)$ we set
\begin{align*}
\mathcal E(f,g):=\int_X[D_{H_\infty}f,D_{H_\infty}g]_{H_\infty}d\mu_\infty,
\end{align*}
where $D_{H_\infty}=Q_\infty D$ is the gradient along the directions of $H_{\infty}$. Following \cite[Chapter 1]{MR92} it follows that there exists an operator $\mathscr L_2:D(\mathscr L_2)\subset \elle^2(X,\mu_\infty)\rightarrow X$ such that for any $f\in D(\mathscr L_2)$ and any $g\in C_b^1(X)$ we have
\begin{align*}
\mathcal E(f,g)=-\int_X \mathscr L_2f gd\mu_\infty .
\end{align*}
The operator $\mathscr L_2$ is self-adjoint and it generates an analytic contraction $C_0$-semigroup on $\elle^2(X,\mu_\infty)$. Moreover, if $f=\varphi(x_1^*,\ldots,x_n^*)$ for some smooth function $\varphi$ and $x_i^*\in X^*$, $i=1,\ldots,n$, then the operator $\mathscr L_2$ reads as
\begin{align*}
\mathscr  L_2f:=\sum_{i,j=1}^nq^0_{ij}\frac{\partial^2\varphi}{\partial \xi_i \partial \xi_j}-\sum_{,i=1}^nx_i^*\frac{\partial \varphi}{\partial \xi_i},
\end{align*}
where $q^0_{ij}=\langle Q_\infty x^*_j,x^*_i\rangle_{X\times X^*}$. In \cite{GV03} the authors provide a generalization of $\mathscr L_2$, defining the Wiener space $(X,\mu_\infty,H_\infty)$ as follows. They consider two operators $Q:X^*\rightarrow X$ and $A:D(A)\subset X\rightarrow X$ such that $Q$ is a linear, bounded, nonnegative and symmetric operator (see Hypothesis \ref{ipo_1}) and $A$ is the infinitesimal generator of a strongly continuous semigroup. Let us denote by $(e^{tA})_{t\geq0}$ the semigroup generated by $A$. They assume that the integral
\begin{align*}
\int_0^\infty e^{tA}Qe^{tA^*}dt,
\end{align*}
with values in $\mathcal L(X^*;X)$, exists as a Pettis integral and the operator $Q_\infty:X^*\rightarrow X$ defined by
\begin{align*}
Q_\infty x^*:= \int_0^\infty e^{tA}Qe^{tA^*}dt\ \!x^*,
\end{align*}
is the covariance operator of the Gaussian measure $\mu_\infty$. In such a way they can define the Reproducing Kernel Hilbert Space $H$ associated to $Q$, and they prove the closability of a gradient operator $D_H=QD$. Thanks to a stochastic representation, the authors define a semigroup $P(t)$ and its infinitesimal generator $\mathbb L$ on $\elle^p(X,\mu_\infty)$ which on smooth functions $f$ (with $f=\varphi(x_1^*,\ldots,x_n^*)$, for some $\varphi\in C_b^2(\R^n)$, $n\in\N$ and $x_i^*\in D(A^*)$, $i=1,\ldots,n$) reads as
\begin{align*}
\mathbb Lf:=\sum_{i,j=1}^n \tilde q_{ij}\frac{\partial^2 \varphi}{\partial \xi_i\partial\xi_j}+\sum_{i=1}^n A^*x^*_i \frac{\partial \varphi}{\partial\xi_i},
\end{align*}
with $\tilde q_{ij}=\langle Qx^*_j,x^*_i\rangle_{X\times X^*}$. From the results in \cite{GK98}, the authors deduce that the set
\begin{align*}
\mathscr F_0:=\{f\in \mathscr F:\langle \cdot, A^*Df\rangle_{X\times X^*}\in C_b(X)\},
\end{align*}
is a core for $\mathbb L$. Here $\mathscr F$ is the set of functions $f\in C_b^2(X)$ such that there exists $\varphi\in C_b^2(\R^n)$ and $x_1^*,\ldots,x_n^*\in D(A^*)$ such that $f(x)=\varphi(\langle x,x_1^*\rangle _{X\times X^*}, \ldots, \langle x,x_1^*\rangle _{X\times X^*})$ for any $x\in X$. Finally, arguing as in \cite{G99}, the authors show different characterizations of the analyticity of $P(t)$. In particular, they prove that $P(t)$ is analytic in $\elle^2(X,\mu_\infty)$ if and only if $Q_\infty A^*x^*\in H$ for any $x^*\in D(A^*)$ and there exists a positive constant $c$ such that
\begin{align*}
|Q_\infty A^* x^*|_H\leq c|Qx^*|, \quad x^*\in D(A^*).
\end{align*}

This characterization is the starting point of \cite{MV07}, where the authors generalize the results in \cite{CFMP05} to the infinite dimensional case.
To begin with, they prove that the operator $B\in\mathcal L(H)$, which is the extension of $Q_\infty A^*$ to the whole $H$, satisfies $B+B^*=-Id_H$. Let
\begin{align*}
\mathcal E_B(u,v):=-\int_X[BD_Hu,D_Hv]_Hd\mu_\infty,
\end{align*}
on $u,v\in C_b^1(X)$, and let $\widetilde {\mathbb L}:D(\widetilde {\mathbb L})\subset X\rightarrow X$ be the operator associated to $\mathcal E_B$ in $\elle^2(X,\mu_\infty)$ in the sense of \cite[Chapter 1]{MR92}, i.e., 
\begin{align*}
\mathcal E_B(u,v)=-\int_X\mathbb Luvd\mu_\infty,
\end{align*}
for any $u\in D(\widetilde {\mathbb L})$ and $v\in C_b^1(X)$. The authors show that $\widetilde {\mathbb L}=\mathbb L$, where $\mathbb L$ is the infinitesimal generator of $P(t)$. 
By means of the the numerical range theorem (see \cite{GR97}) the authors prove that for any $p\in(1,+\infty)$ the semigroup $P(t)$ is analytic in $\elle^p(X,\mu_\infty)$ with sector of analiticity $\Sigma_{\theta_p}$ defined in \eqref{sector_p}. Also in this case, this sector is optimal.
We remark that, differently from $\mathscr L_2$, in general the operator $\mathbb L$ is not self-adjoint and therefore it is not possible to use the theory of self-adjoint operators to prove the analyticity of $\mathbb L$.  

In this paper we consider the operator $L_2$ associated in $\elle^2(X,\nu_\infty)$ to the nonsymmetric bilinear form
\begin{align*}
\mathcal E^\nu_B(u,v):=-\int_X[BD_Hu,D_Hv]_Hd\nu_\infty,
\end{align*}
in the sense of \cite{MR92}, where
\begin{align*}
\nu_\infty:=e^{-U}\mu_\infty.
\end{align*}
On smooth functions the operator $L_2$ has the form \eqref{pert_O-U}.
By taking advantage of the definition of $L_2$ and its adjoint operator $L_2^*$ in $\elle^2(X,\nu_\infty)$, we extend $L_2$ and the associated semigroup to $\elle^p(X,\nu_\infty)$, $p\in(1,+\infty)$. Finally, we prove that the semigroup associated to $L_p$ is analytic in $\elle^p(X,\nu_\infty)$ with sector of analyticity $\Sigma_{\theta_p}$, and we provide an example to which our results apply.

We stress that, at the best of our knowledge, in the case of perturbed Ornstein-Uhlenbeck operator no explicit core of $L_p$  is known. However, for $p\geq2$ we identify a set of smooth functions which allows us to overcome this difficulty, and a we obtain the desired result. In the case $p\in(1,2)$ we take advantage of the fact that $D(L_2)$ is a core for $L_p$.

It would be interesting to provide more examples to which apply our results and to understand some features of the covariance operator $Q_\infty$. Indeed, if one consider the classical Wiener space, i.e., the case $X=\elle^2(0,1)$, $Q$ as in \eqref{op_Q_ex} and $A=-Id$, then $Q_\infty=\frac12 Q$ and a function $f$ is an eigenvector of $Q$ with eigenvalue $\lambda$ if and only if $f$ solves on $(0,1)$ the problem
\begin{align*}
\lambda f''+f=0, \quad f(0)=0, \quad f'(1)=0.
\end{align*}
However, also in apparently friendly contexts the situation is far to be well understood. In the example which we provide in Section \ref{example} we have an explicit formula for $Q_\infty$, but we don't know how to get more informations on $Q_\infty$ and $L$. We devote these and other stimulating questions to future papers.

The paper is organized as follows. In Section \ref{ferragosto} we uniform the notations used in the symmetric and in the nonsymmetric case, which are different and sometimes may give rise to confusion and misunderstandings. Then, we prove that $D_H$ is closable on smooth functions in $\elle^p(X,\nu_\infty)$ for any $p\in(1,+\infty)$ and define the Sobolev spaces as the domain of the closure of $D_H$. 
Section \ref{nonsymm_OU_op_section} is devoted to define the nonsymmetric Ornstein-Uhlenbeck operator and semigroup in $\elle^p(X,\nu_\infty)$. At first, thanks to the theory of nonsymmetric Dirichlet forms, we provide the definition of the Ornstein-Uhlenbeck operator and semigroup in $\elle^2(X,\nu_\infty)$. Later, we extend both the operator $L_2$ and the semigroup $(T_2(t))_{t\geq0}$ to any $\elle^p(X,\nu_\infty)$, $p\in(1,+\infty)$. We conclude the section by showing the inclusion $D(L_q)\subset D(L_p)$ for any $p,q\in(1,+\infty)$ and $q>p$. These results allow us to overcome the fact that we don't know a core for $L_p$. 
In Section \ref{analyticity} we use the numerical range thorem to show that $L_p$ generates an analytic semigroup in $\elle^p(X,\nu_\infty)$ with sector $\Sigma_{\theta_p}$ for any $p\in(1+\infty)$. 
Finally, in Section \ref{example} we provide a explicit example of operators $Q$ and $A$ and of function $U$ which satisfy our assumptions. 

\subsection{Notations}
Let $X$ be a separable Banach space. We denote by $\langle\cdot,\cdot\rangle_{X\times X^*}$ the duality, by $\|\cdot\|_X$ its norm and by $\|\cdot\|_{X^*}$ the norm of its dual. Further, for a general Banach space $V$ we denote by $\mathcal L(V)$ the space of linear bounded operators from $V$ onto $V$ endowed with the operator norm.
For any $k\in\N\cup\{\infty\}$ and any $n\in\N$ we denote by $C_b^k(\R^n)$ the continuous and bounded functions on $\R^n$ whose derivatives up to the order $k$ are continuous and bounded. We denote by $C_b^k(X)$ the set of Fr\'echet-differentiable functions on $X$ up to order $k$ with bounded Fr\'echet derivative.

Let $Y$ be a separable Hilbert space with inner product $\langle,\cdot,\cdot \rangle_{Y\times Y}$ and let $\gamma$ be a Borel measure on $X$. For any $p\in[1,+\infty)$ let us set 
\begin{align}
\|f\|_{\elle^p(X,\gamma;Y)}:=\left(\int_X|f(x)|_Y^p \gamma(dx)\right)^{1/p},
\end{align}
for any measurable function $:X\rightarrow Yf$. We denote by $\elle^p(X,\gamma;Y)$ the space of equivalence classes of Bochner integrable functions $f$ with $\|f\|_{\elle^p(X,\gamma;Y)}<+\infty$.

For any $y,z\in Y$ we denote by $y\otimes z:Y\times Y\rightarrow \R$ the map defined by 
\begin{align*}
(y\otimes z)(x,w)=\langle y,x\rangle_{Y\times Y}\langle z,w\rangle_{Y\times Y}, \quad x,w\in Y.
\end{align*}
 
\section{Preliminaries and Sobolev spaces}
\label{ferragosto}
We state the following assumptions on the operators $Q$ and $A$.
\begin{hyp}
\label{ipo_1}
\begin{itemize}
\item[(i)]
$Q:X^*\rightarrow X$ is a linear and bounded operator which is symmetric and nonnegative, i.e., 
\begin{align*}
\langle Q x^*, y^*\rangle_{X\times X^*}=\langle Q y^*, x^*\rangle_{X\times X^*}, \quad \langle Q x^*, x^*\rangle_{X\times X^*}\geq0, \quad \forall x^*,y^*\in X^*.
\end{align*}
\item[(ii)]
$A:D(A)\subseteq X\rightarrow X$ is the infinitesimal generator of a strongly continuous contraction semigroup $\left(e^{tA}\right)_{t\geq0}$ on $X$.
\end{itemize}
\end{hyp}

The following definition shows that given a nonnegative and symmetric operator $F:X^*\rightarrow X$ we can define a Hilbert space $K\subset X$, which is called the Reproducing Kernel Hilbert Space associated to $F$.
\begin{defn}
\label{RKHS}
Let $F:X^*\rightarrow X$ be a linear, bounded, nonnegative and symmetric operator. On $FX^*$ we define the inner product $[Fx^*,Fy^*]_K:=\langle Fx^*,y^*\rangle_{X\times X^*}$ for any $x^*,y^*\in X^*$. We denote by $|Kx^*|^2_K:=\langle Fx^*,x^*\rangle_{X\times X^*}$ the associated norm.
We set $K:=\overline{FX^*}^{|\cdot|_K}\subset X$ and we call $K$ the Reproducing Kernel Hilbert Space (RKHS) associated with $F$.
\end{defn}

From \cite[Proposition 1.2]{VN98} the function $s\mapsto e^{sA}Qe^{sA^*}$ is strongly measurable and we may define, for any $t>0$, the nonnegative symmetric operator $Q_t\in \mathcal L(X^*;X)$ by
\begin{align*}
Q_t:=\int_0^t e^{sA}Qe^{sA^*}ds.
\end{align*}
Further, we denote by $H_t$ the Reproducing Kernel Hilbert Space associated to $Q_t$. We assume that the family of operators $(Q_t)_{t\geq0}$ satisfies the following hypotheses (see e.g. \cite[Sections 2 \& 6]{GV03}).
\begin{hyp}
\label{portafoglio}
\begin{enumerate}
\item [(i)]The operator $Q_t$ is the covariance operator of a centred Gaussian measure $\mu_t$ on $X$ for any $t>0$.
\item [(ii)] For any $x^*\in X^*$, there exists ${\rm weak}-\lim_{t\rightarrow+\infty}Q_t x^*=:Q_\infty x^* $ and $Q_\infty$ is the covariance operator of a centred nondegenerate Gaussian measure $\mu_\infty$.
\end{enumerate}
\end{hyp}

Hypothesis \ref{portafoglio}$(ii)$ implies that
\begin{align*}
\widehat{\mu_\infty}(f)=\exp\left(-\frac12 \langle Q_\infty f,f\rangle_{X\times X^*}\right), \quad f\in X^*.
\end{align*}
We follow \cite[Chapter 2]{Bog98} to construct the Cameron-Martin space $H_\infty$ associated to $\mu_\infty$, which gives the abstract Wiener space $(X,\mu_\infty,H_\infty)$. We conclude by showing that $H_\infty$ is the Reproducing Kernel Hilbert Space associated with $Q_\infty$.

From \cite[Fernique Theorem 2.8.5]{Bog98} it follows that $X^*\subset \elle^2(X,\mu_\infty)$, and we denote by $j:X^*\rightarrow \elle^2(X,\mu_\infty)$ the injection of $X^*$ in $\elle^2(X,\mu_\infty)$. From \cite[Theorem 2.2.4]{Bog98} we have
\begin{align}
\label{caratt_cova_inf}
\langle Q_\infty f,g\rangle_{X\times X^*}=\int_Xfgd\mu_\infty, \quad  f,g\in X^*.
\end{align}
We denote by $X_{\mu_\infty}^*$ the closure of $j(X^*)$ in $\elle^2(X,\mu_\infty)$ and we define $R:X^*_{\mu_\infty}\rightarrow (X^*)'$ by
\begin{align}
\label{op_R}
R(f)(g):=\int_Xfgd\mu_\infty, \quad f\in X_{\mu_\infty}^*, \ g\in X^*.
\end{align}
For any $f\in X^*_{\mu_\infty}$ the map $g\mapsto R(f)(g)$ is weakly$^*$-continuous on $X^*$, and therefore $R(X_{\mu_\infty}^*)\subset X$. We still denote by $R(f)$ the unique element $y\in X$ such that for any $g\in X^*$ we have $R(f)(g)=\langle y,g\rangle_{X\times X^*}$. Further, the injection $j$ is the adjoint operator of $R$. The Cameron-Martin space $H_\infty$ associated to $\mu_\infty$ is defined as follows (see e.g. \cite[Chapter 2, Section 2]{Bog98}):
\begin{align*}
|h|_{H_\infty}& :=\sup\left\{\langle h,\ell\rangle_{X\times X^*}:\ell\in X^*, \ R(\ell)(\ell)=\|R^*\ell\|^2_{\elle^2(X,\mu_\infty)}\leq 1\right\}, \\
H_\infty & :=\left\{h\in X:|h|_{H_\infty}<+\infty\right\}.
\end{align*}
From \cite[Lemma 2.4.1]{Bog98} it follows that $h\in H_\infty$ if and only if there exists $\hat h\in X_{\mu_\infty}^*$ such that $R(\hat h)=h$. $H_\infty$ is a Hilbert space if endowed with inner product
\begin{align}
\label{prod_scal_Hinf}
[h,k]_{H_\infty}=\langle \hat h,\hat k\rangle_{\elle^2(X,\mu_\infty)}, \quad h,k\in H_\infty.
\end{align}
We stress that for any $f\in X^*$, from \eqref{caratt_cova_inf} and \eqref{op_R} we have $Q_\infty f\in H_\infty$ and that $R(R^*f)=Q_\infty f$, i.e., $\widehat {Q_\infty f}=R^*f$. 
Further, from \eqref{prod_scal_Hinf} we deduce that
\begin{align}
\label{inner_prod_inf_dual}
\langle Q_\infty f,g\rangle_{X\times X^*}=[Q_\infty f,Q_\infty g]_{H_\infty}, \quad f,g\in X^*.
\end{align}
We get the following characterization of $H_\infty$.
\begin{lemma}
\label{caratt_cameron_martin_inf}
$\displaystyle H_\infty=\overline{Q_\infty X^*}^{|\cdot|_{H_\infty}}$, that is, the Cameron-Martin space $H_\infty$ is the Reproducing Kernel Hilbert Space associated to $Q_\infty$.
\end{lemma}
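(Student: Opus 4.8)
The plan is to show the two inclusions $\overline{Q_\infty X^*}^{|\cdot|_{H_\infty}}\subseteq H_\infty$ and $H_\infty\subseteq\overline{Q_\infty X^*}^{|\cdot|_{H_\infty}}$, and then to check that the inner product coming from Definition~\ref{RKHS} applied to $F=Q_\infty$ coincides with $[\cdot,\cdot]_{H_\infty}$. For the first inclusion, recall from the remark preceding the statement that $Q_\infty f\in H_\infty$ for every $f\in X^*$, with $\widehat{Q_\infty f}=R^*f\in X^*_{\mu_\infty}$; since $H_\infty$ is complete for $|\cdot|_{H_\infty}$, it contains the closure of $Q_\infty X^*$ in that norm. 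For the reverse inclusion, take $h\in H_\infty$; by \cite[Lemma~2.4.1]{Bog98} there is $\hat h\in X^*_{\mu_\infty}$ with $R(\hat h)=h$, and by definition $X^*_{\mu_\infty}$ is the closure of $j(X^*)$ in $\elle^2(X,\mu_\infty)$, so pick $f_n\in X^*$ with $j(f_n)\to\hat h$ in $\elle^2(X,\mu_\infty)$. Then $Q_\infty f_n=R(R^*f_n)=R(j(f_n))$ (using $\widehat{Q_\infty f}=R^*f$ and that $R^*=j$ on $X^*$), and applying $R$ — which is an isometry from $X^*_{\mu_\infty}$ onto $H_\infty$ by \eqref{prod_scal_Hinf} — gives $|h-Q_\infty f_n|_{H_\infty}=\|\hat h-j(f_n)\|_{\elle^2(X,\mu_\infty)}\to 0$, so $h\in\overline{Q_\infty X^*}^{|\cdot|_{H_\infty}}$.

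It remains to identify the norms. On $Q_\infty X^*$ the RKHS norm of Definition~\ref{RKHS} with $F=Q_\infty$ is $|Q_\infty f|^2=\langle Q_\infty f,f\rangle_{X\times X^*}$, and by \eqref{inner_prod_inf_dual} this equals $[Q_\infty f,Q_\infty f]_{H_\infty}$; similarly the bilinear forms agree by polarization, or directly from \eqref{inner_prod_inf_dual}. Hence the two inner-product space structures on $Q_\infty X^*$ coincide, and therefore so do their completions inside $X$. Combining this with the set equality established above yields $H_\infty=\overline{Q_\infty X^*}^{|\cdot|_{H_\infty}}$ as Reproducing Kernel Hilbert Spaces, which is the claim.

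The only genuinely delicate point is the bookkeeping with the operators $R$, $R^*=j$, and the identity $R(R^*f)=Q_\infty f$, making sure the density argument is carried out in the right topology: the convergence $j(f_n)\to\hat h$ is in $\elle^2(X,\mu_\infty)$, and one must invoke that $R:X^*_{\mu_\infty}\to H_\infty$ is a surjective isometry (immediate from \eqref{prod_scal_Hinf} and Bogachev's \cite[Lemma~2.4.1]{Bog98}) to transfer it to convergence in $|\cdot|_{H_\infty}$. Everything else is a routine unwinding of the definitions recalled in this section, so no substantial new idea is needed beyond what \cite[Chapter~2]{Bog98} already provides.
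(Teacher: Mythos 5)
Your proposal is correct and follows essentially the same route as the paper: the key step in both is to take $h\in H_\infty$, write $R(\hat h)=h$, approximate $\hat h$ by $j(f_n)=R^*f_n$ in $\elle^2(X,\mu_\infty)$, and use the isometry \eqref{prod_scal_Hinf} together with $\widehat{Q_\infty f_n}=R^*f_n$ to conclude $Q_\infty f_n\to h$ in $H_\infty$. You merely spell out the easy converse inclusion (via completeness of $H_\infty$) and the identification of the inner products, which the paper dispatches with ``analogous arguments'' and the already-established identity \eqref{inner_prod_inf_dual}.
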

\begin{proof}
The proof is quite simple but we provide it for reader's convenience. Let $h\in H_\infty$. Then, there exists $\hat h \in X_{\mu_\infty}^*$ such that $R_{\mu_\infty}(\hat h)=h$. In particular, there exists $(f_n)\subset X^*$ such that $R^*f_n\rightarrow \hat h$ in $\elle^2(X,\mu_\infty)$. We claim that $Q_\infty f_n\rightarrow h$ in $H_\infty$. Indeed, from \eqref{prod_scal_Hinf} and recalling that $\widehat {Q_\infty f_n}=R^*f_n$ for any $n\in\N$, it follows that
\begin{align*}
|Q_\infty f_n-h|_{H_\infty}^2
= \int_X|R^*f_n-\hat h|^2d\mu_\infty\rightarrow0,\quad n\rightarrow+\infty.
\end{align*}
This means that $\displaystyle H_\infty\subseteq\overline{Q_\infty X^*}^{|\cdot|_{H_\infty}}$. The converse inclusion follows from analogous arguments.
\end{proof}

The continuous injection of $Q_\infty X^*$ into $X$ can be continuously extended to $H_\infty$. We denote by $i_\infty$ the extension of this injection. If we denote by $i_\infty^*:X^*\rightarrow H_\infty^*$ the adjoint operator and we identify $H_\infty^*$ with $H_\infty$ by means of the Riesz Representation Theorem, then $Q_\infty=i_\infty\circ i_\infty^*$. Indeed, for any $f,g\in X^*$ we have
\begin{align}
\label{prop_covariance_op}
\langle i_\infty\circ i_\infty^* f,g\rangle_{X\times X^*}
= & [i_\infty^*f,i_\infty^*g]_{H_\infty}
=\langle R^*f,R^*g\rangle_{\elle^2(X,\mu_\infty)}=\langle Q_\infty f,g\rangle_{X\times X^*},
\end{align}
which gives $Q_\infty=i_\infty\circ i_\infty^*$.

We introduce the following spaces of functions, which have been already considered in \cite{MV07,MV08}.
\begin{defn}
For any $k\in\N\cup\{\infty\}$ we set
\begin{align*}
\fcon_{b}^{k,1}(X)
:=\{ & f(x)=\varphi(\langle x,x_1^*\rangle_{X\times X^*},\ldots,\langle x,x_n^*\rangle_{X\times X^*}):\ n\in\N, \ \varphi\in C_b^k(\R^n), \ x_i\in D(A^*) ,\\
&  i=1,\ldots,n, \ x\in X\}.
\end{align*}
\end{defn}


\begin{rmk}
\label{eq_cyl_functions}
We stress that the spaces $\fcon_{b}^{k,1}(X)$ are different from those considered in \cite{ACF19,CF16,DPL14,GGV03}. Indeed, in these papers the authors consider the spaces $\fcon_{b}^{k}(X)$, 
that is, the spaces of cylindrical functions $f$ such that $f(x)=\varphi(\langle x,y_1^*\rangle_{X\times X^*}, \ldots,\langle x,y_n^*\rangle_{X\times X^*})$ for any $x\in X$, for some $\varphi\in C_b^k(\R^n)$ and $y^*_1,\ldots,y^*_n\in X^*$.
Even if the space $\fcon_{b}^{k,1}(X)$ is smaller than $\fcon_{b}^{k}(X)$ 
it is "good" in the sense that it is big enough. 
Indeed, from \cite[Theorem 2.2]{K65} it follows that $D(A^*)$ is weak$^*$-dense in $X^*$. Since $\fcon_{b}^{k}(X)$ is dense in $\elle^p(X,\nu_\infty)$ for any $p\in[1,+\infty)$ and any $k\in\N$ (see \cite[Corollary 3.5.2]{Bog98}), we get that $\fcon_{b}^{k,1}(X)$ is dense in $\elle^p(X,\nu_\infty)$ for any $p\in[1,+\infty)$ and any $k\in\N$.
\end{rmk}

\begin{example}
\label{example1}
We provide a construction of the classical Wiener space by means of special operators $A$ and $Q$. We consider the classical Wiener space $(X,H_\infty,\nu_\infty)$, where $X=\elle^2(0,1)$, $H_\infty=\{f\in W^{1,2}(0,1):f(0)=0\}$ and $\mu_\infty=P^W$ is the classical Wiener measure, see e.g. \cite[Example 2.3.11 \& Remark 2.3.13]{Bog98}. Let us denote by $Q_\infty$ its covariance operator and $Q:=Q_\infty^{1/2}$. Then, if we set
\begin{align*}
D(A):=QX, \quad A:=-Q,
\end{align*}
$(A,D(A))$ is a closed operator with dense domain satisfying $\langle A f,f\rangle_{\elle^2(X,\mu_\infty)}\leq 0$ for any $f\in D(A)$. Therefore, $A$ generates an analytic semigroup which is also strongly continuous. Further, we have
\begin{align*}
Q_t=Q_\infty(Id_X-e^{tA}), \quad t>0,
\end{align*}
which implies that $Q_t$ is a trace class operator for any $t>0$ and the covariance operator $Q_\infty$ coincides with the integral
\begin{align*}
\int_0^{+\infty}e^{tA}Qe^{tA}dt.
\end{align*}
\end{example}

\subsection{Reproducing Kernel associated to \texorpdfstring{$Q$}{Q} and Sobolev Spaces}
We recall that $Q$ is a bounded, linear, nonnegative and symmetric operator. From Definition \ref{RKHS} we can define a scalar product on $QX^*$ and we denote by $H$ the Reproducing Kernel Hilbert Space associated to $Q$. $H$ is a Hilbert space if endowed with the scalar product $[\cdot,\cdot]_H$. The inclusion $QX^*\hookrightarrow X$ can be extended to the injection $i:H\rightarrow X$ and we consider the adjoint operator $i^*:X^*\rightarrow H$, where again we have identify $H^*$ and $H$. Arguing as for $i_\infty$ and $i_\infty^*$ we infer that $Q=i\circ i^*$.


The following hypothesis is very important since \cite[Theorem 8.3]{GV03} states that it is equivalent to the analyticity in $\elle^p(X,\mu_\infty)$ of the Ornstein-Uhlenbeck semigroup $P(t)$ defined on $C_b(X)$ by
\begin{align*}
(P(t)f)(x):=\int_Xf(e^{tA}x+y)\mu_t(dy), \quad f\in C_b(X),
\end{align*}
and extended to $\elle^p(X,\mu_\infty)$ for any $p\in(1,+\infty)$. 

\begin{hyp}
\label{ipo_RKH}
For any $x^*\in D(A^*)$ we have $i^*_\infty A^*x^*\in H$ and there exists a positive constant $c$ such that
\begin{align}
\label{schema}
|i^*_\infty A^* x^*|_H\leq c|i^*x^*|_H, \qquad x\in D(A^*).
\end{align}
\end{hyp}

$i^*$ is continuous with respect to the weak$^*$ topology on $X^*$ and to the weak topology on $H$. Since $D(A^*)$ is weak$^*$-dense in $X^*$, it follows that $i^*$ maps $D(A^*)$ onto a dense subspace of $H$. Then, there exists an operator $B\in\mathcal L(H)$ such that $Bi^*x^*=i^*_\infty A^*x^*$ for any $x^*\in D(A^*)$ and $\|B\|_{\mathcal L(H)}\leq c$. The operator $B$ enjoys the following properties.

\begin{lemma}
\label{propr_B}
{\cite[Lemma 2.2]{MV07}}
$B+B^*=-Id_H$ and $[Bh,h]_H=-\frac12|h|^2_H$ for any $h\in H$.
\end{lemma}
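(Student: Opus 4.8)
Write-up plan.

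The assertion $B+B^*=-\mathrm{Id}_H$ means $[(B+B^*)h,k]_H=-[h,k]_H$ for all $h,k\in H$; since $B,B^*\in\mathcal L(H)$ and $i^*$ maps $D(A^*)$ onto a dense subspace of $H$, it suffices to prove this for $h=i^*x^*$, $k=i^*y^*$ with $x^*,y^*\in D(A^*)$ and then pass to the limit. The plan is to evaluate both sides explicitly. On the right, $[i^*x^*,i^*y^*]_H=\langle ii^*x^*,y^*\rangle_{X\times X^*}=\langle Qx^*,y^*\rangle_{X\times X^*}$, using $i\circ i^*=Q$. On the left, $[(B+B^*)i^*x^*,i^*y^*]_H=[Bi^*x^*,i^*y^*]_H+[Bi^*y^*,i^*x^*]_H$; and since by definition $Bi^*x^*=i^*_\infty A^*x^*$, which by Hypothesis \ref{ipo_RKH} belongs to $H$, and since the injections $i\colon H\hookrightarrow X$ and $i_\infty\colon H_\infty\hookrightarrow X$ agree on their common domain (so that $i(Bi^*x^*)=i_\infty(i^*_\infty A^*x^*)=Q_\infty A^*x^*$ by $i_\infty\circ i^*_\infty=Q_\infty$), we obtain $[Bi^*x^*,i^*y^*]_H=\langle i(Bi^*x^*),y^*\rangle_{X\times X^*}=\langle Q_\infty A^*x^*,y^*\rangle_{X\times X^*}$. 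Thus the whole statement reduces to the Lyapunov-type identity
\begin{align*}
\langle Q_\infty A^*x^*,y^*\rangle_{X\times X^*}+\langle Q_\infty A^*y^*,x^*\rangle_{X\times X^*}=-\langle Qx^*,y^*\rangle_{X\times X^*},\qquad x^*,y^*\in D(A^*).
\end{align*}

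To prove it I would differentiate $s\mapsto\langle e^{sA}Qe^{sA^*}x^*,y^*\rangle_{X\times X^*}=\langle Qe^{sA^*}x^*,e^{sA^*}y^*\rangle_{X\times X^*}$. For $x^*,y^*\in D(A^*)$ one has $\tfrac{d}{ds}e^{sA^*}x^*=e^{sA^*}A^*x^*$ in the weak$^*$ sense, which together with the symmetry of $Q$ gives $\tfrac{d}{ds}\langle Qe^{sA^*}x^*,e^{sA^*}y^*\rangle_{X\times X^*}=\langle Qe^{sA^*}A^*x^*,e^{sA^*}y^*\rangle_{X\times X^*}+\langle Qe^{sA^*}x^*,e^{sA^*}A^*y^*\rangle_{X\times X^*}$. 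Integrating over $[0,t]$, using $Q_t=\int_0^t e^{sA}Qe^{sA^*}\,ds$ (so $\int_0^t e^{sA}Qe^{sA^*}A^*x^*\,ds=Q_tA^*x^*$, since $A^*x^*\in X^*$) and the symmetry of $Q$ once more, we get
\begin{align*}
\langle e^{tA}Qe^{tA^*}x^*,y^*\rangle_{X\times X^*}-\langle Qx^*,y^*\rangle_{X\times X^*}=\langle Q_tA^*x^*,y^*\rangle_{X\times X^*}+\langle Q_tA^*y^*,x^*\rangle_{X\times X^*}.
\end{align*}
Letting $t\to+\infty$, Hypothesis \ref{portafoglio}(ii) shows the right-hand side converges to $\langle Q_\infty A^*x^*,y^*\rangle_{X\times X^*}+\langle Q_\infty A^*y^*,x^*\rangle_{X\times X^*}$, so the identity follows once we check that $\langle e^{tA}Qe^{tA^*}x^*,y^*\rangle_{X\times X^*}\to0$.

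For the latter, put $g(t):=\langle Qe^{tA^*}x^*,e^{tA^*}x^*\rangle_{X\times X^*}\geq0$. From $\langle Q_tx^*,x^*\rangle_{X\times X^*}=\int_0^t g(s)\,ds$ and Hypothesis \ref{portafoglio}(ii) we have $g\in\elle^1(0,+\infty)$, and likewise $\tilde g(t):=\langle Qe^{tA^*}A^*x^*,e^{tA^*}A^*x^*\rangle_{X\times X^*}\in\elle^1(0,+\infty)$. Since $g$ is $C^1$ with $|g'(t)|=2|\langle Qe^{tA^*}A^*x^*,e^{tA^*}x^*\rangle_{X\times X^*}|\leq2\,\tilde g(t)^{1/2}g(t)^{1/2}$ by the Cauchy--Schwarz inequality for the nonnegative symmetric operator $Q$, for $t_2>t_1$ we get $|g(t_2)-g(t_1)|\leq2\big(\int_{t_1}^{+\infty}\tilde g\big)^{1/2}\big(\int_{t_1}^{+\infty}g\big)^{1/2}\to0$ as $t_1\to+\infty$; hence $\lim_{t\to+\infty}g(t)$ exists, and, since $g\in\elle^1(0,+\infty)$, it equals $0$. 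A final Cauchy--Schwarz estimate gives $|\langle e^{tA}Qe^{tA^*}x^*,y^*\rangle_{X\times X^*}|\leq g(t)^{1/2}\|Q\|_{\mathcal L(X^*;X)}^{1/2}\|y^*\|_{X^*}\to0$, which proves the Lyapunov identity and therefore, by the reduction above and the density of $i^*(D(A^*))$ in $H$, that $B+B^*=-\mathrm{Id}_H$. Finally, for $h\in H$ we have $[(B+B^*)h,h]_H=-|h|_H^2$, whence $[Bh,h]_H=-\tfrac12|h|_H^2$ using $[B^*h,h]_H=[Bh,h]_H$ in the real Hilbert space $H$. I expect the main obstacle to be the passage to the limit $t\to+\infty$, i.e. establishing the Lyapunov identity with the correct sign; a secondary technical point is the careful justification of the differentiation of $s\mapsto e^{sA^*}x^*$ and of the manipulations above on $D(A^*)$ when $X$ is not reflexive, which is standard and can be imported from the groundwork in \cite{GV03}.
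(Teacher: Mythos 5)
Your argument is correct, and it is essentially the proof of the result being cited: the paper itself offers no proof of this lemma, deferring entirely to \cite[Lemma 2.2]{MV07}, and that proof proceeds exactly as you do --- reduce via the reproducing property $[h,i^*y^*]_H=\langle ih,y^*\rangle_{X\times X^*}$ and the density of $i^*(D(A^*))$ to the Lyapunov identity $\langle Q_\infty A^*x^*,y^*\rangle+\langle Q_\infty A^*y^*,x^*\rangle=-\langle Qx^*,y^*\rangle$, obtained by integrating $\frac{d}{ds}\langle e^{sA}Qe^{sA^*}x^*,y^*\rangle$ and showing the boundary term vanishes at infinity. Your $\elle^1$--Cauchy--Schwarz argument for $\lim_{t\to+\infty}\langle Qe^{tA^*}x^*,e^{tA^*}x^*\rangle_{X\times X^*}=0$ is a clean, self-contained justification of that last step under Hypotheses \ref{ipo_1} and \ref{portafoglio}.
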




We now introduce two operators which are crucial for the definition of Sobolev spaces in our context. The first one is the gradient along the directions of the Reproducing Kernel Hilbert Space $H$, while the second one allows us to prove an integration by parts formula with respect to suitable directions in $H$ (see e.g. \cite[Section 3]{GGV03}).
\begin{defn}
We define the operator $D_H:\fcon_{b}^{1}(X)\rightarrow \elle^p(X,\mu_\infty;H)$ by
\begin{align*}
D_Hf(x):=i^*Df(x)=\sum_{j=1}^n\frac{\partial \varphi}{\partial \xi_j}(\langle x, x^*_1\rangle_{X\times X^*},\ldots,\langle x,x^*_n\rangle_{X\times X^*}) i^*x^*_j, \quad x\in X,
\end{align*}
where $f\in \fcon_{b}^{1}(X)$ and $f(x)=\varphi(\langle x,x^*_1\rangle_{X\times X^*},\ldots,\langle x, x^*_n\rangle_{X\times X^*})$ for some $n\in\N$, $\varphi\in C^1_b(\R^n)$, $x_i^*\in X^*$ for $i=1,\ldots,n$ and for any $x\in X$.
\end{defn}

\begin{defn}
We define the operator $V:D(V)\subseteq H_\infty\rightarrow H$ as follows:
\begin{align}
\label{operator_V}
D(V):=\{i^*_\infty x^*:x^*\in X^*\}, \quad V(i^*_\infty x^*)=i^*x^*, \quad x^*\in X^*.
\end{align}
\end{defn}
$V$ is densely defined on $H_\infty$, then it is possible to consider the adjoint operator $V^*:D(V^*)\subset H\rightarrow H_\infty$. Thanks to Hypothesis \ref{ipo_RKH} and \cite[Theorems 8.1, 8.3 \& Proposition 8.7]{GV03} it follows that $D_H$ is closable in $\elle^p(X,\mu_\infty)$ and \cite[Theorem 3.5]{GGV03} gives that the operator $V$ is closable. We still denote by $D_H$ the closure of $D_H$ and by $W^{1,p}_H(X,\mu_\infty)$ the domain of the closure. We set
\begin{align*}
\|f\|_{W^{1,p}_H(X,\mu_\infty)}:=\|f\|_{\elle^p(X,\mu_\infty)}+\|D_Hf\|_{\elle^p(X,\mu_\infty;H)}, \quad f\in W^{1,p}_H(X,\mu_\infty).
\end{align*}

The following lemma shows that $\fcon_b^{1,1}(X)$ is dense in $W^{1,p}_H(X,\mu_\infty)$ for any $p\in(1,+\infty)$.
\begin{lemma}
\label{convergenza_fcon}
Let $f\in \fcon_b^1(X)$. Then, for any $p\in(1,+\infty)$ there exists a sequence $(f_n)\subset \fcon_{b}^{1,1}(X)$ such that $f_n\rightarrow f$ in $W^{1,p}_H(X,\mu_\infty)$ as $n\rightarrow+\infty$. In particular, this gives that $\fcon_b^{1,1}(X)$ is dense in $W^{1,p}_H(X,\mu_\infty)$ for any $p\in(1,+\infty)$.
\end{lemma}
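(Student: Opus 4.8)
The plan is to reduce the lemma to the following approximation property for single functionals: \emph{every $y^*\in X^*$ is the limit of a sequence $(z_m^*)\subset D(A^*)$ which is bounded in $X^*$, converges to $y^*$ in the weak$^*$ topology, and satisfies $i^*z_m^*\to i^*y^*$ in $H$.} Granting this, I would fix $f=\varphi(\langle\cdot,y_1^*\rangle_{X\times X^*},\ldots,\langle\cdot,y_n^*\rangle_{X\times X^*})\in\fcon_b^1(X)$ with $\varphi\in C_b^1(\R^n)$, choose for each $i=1,\ldots,n$ a sequence $(z_{i,m}^*)_m\subset D(A^*)$ as above for $y_i^*$, and set $f_m:=\varphi(\langle\cdot,z_{1,m}^*\rangle_{X\times X^*},\ldots,\langle\cdot,z_{n,m}^*\rangle_{X\times X^*})\in\fcon_{b}^{1,1}(X)$. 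Since weak$^*$ convergence gives $\langle x,z_{i,m}^*\rangle_{X\times X^*}\to\langle x,y_i^*\rangle_{X\times X^*}$ for every $x\in X$ and $\varphi$ is bounded and continuous, dominated convergence ($\mu_\infty$ being a probability measure) yields $f_m\to f$ in $\elle^p(X,\mu_\infty)$. For the gradients I would write
\begin{align*}
D_Hf_m(x)-D_Hf(x)=&\sum_{j=1}^n\Big(\tfrac{\partial\varphi}{\partial\xi_j}(\langle x,z_{1,m}^*\rangle_{X\times X^*},\ldots)-\tfrac{\partial\varphi}{\partial\xi_j}(\langle x,y_1^*\rangle_{X\times X^*},\ldots)\Big)\,i^*z_{j,m}^*\\
&+\sum_{j=1}^n\tfrac{\partial\varphi}{\partial\xi_j}(\langle x,y_1^*\rangle_{X\times X^*},\ldots)\,\big(i^*z_{j,m}^*-i^*y_j^*\big),\qquad x\in X.
\end{align*}
The second sum tends to $0$ in $\elle^p(X,\mu_\infty;H)$ because $\partial\varphi/\partial\xi_j$ is bounded and $|i^*z_{j,m}^*-i^*y_j^*|_H\to0$; the first sum tends to $0$ by dominated convergence, its $H$-norm being bounded by a constant (each $(i^*z_{j,m}^*)_m$ is bounded in $H$, being convergent) and converging to $0$ for $\mu_\infty$-a.e.\ $x$ by continuity of $\partial\varphi/\partial\xi_j$ and weak$^*$ convergence of the $z_{i,m}^*$. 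Hence $f_m\to f$ in $W^{1,p}_H(X,\mu_\infty)$, and since $\fcon_b^1(X)$ is by construction dense in $W^{1,p}_H(X,\mu_\infty)$ (being a core for the closed operator $D_H$), the last assertion follows.

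It then remains to establish the approximation property, which is the real content and where the lack of strong continuity of $(e^{tA^*})_{t\ge0}$ must be circumvented. I would fix $y^*\in X^*$ and set $w_k^*:=kR(k,A^*)y^*\in D(A^*)$, where $R(\lambda,A^*)=(\lambda-A^*)^{-1}$ is the adjoint of $R(\lambda,A)$ (which exists with $\|R(\lambda,A)\|\le1/\lambda$ for $\lambda>0$, as $A$ generates a $C_0$ contraction semigroup). Then $\|w_k^*\|_{X^*}\le\|y^*\|_{X^*}$, and $w_k^*\to y^*$ weak$^*$ because $\langle x,w_k^*\rangle_{X\times X^*}=\langle kR(k,A)x,y^*\rangle_{X\times X^*}\to\langle x,y^*\rangle_{X\times X^*}$ for every $x\in X$. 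The sequence $(i^*w_k^*)_k$ is bounded in $H$, since $|i^*w_k^*|_H^2=\langle Qw_k^*,w_k^*\rangle_{X\times X^*}\le\|Q\|_{\mathcal L(X^*;X)}\|y^*\|_{X^*}^2$. Moreover, by symmetry of $Q$, $\langle Qw_k^*,m^*\rangle_{X\times X^*}=\langle Qm^*,w_k^*\rangle_{X\times X^*}\to\langle Qm^*,y^*\rangle_{X\times X^*}=\langle Qy^*,m^*\rangle_{X\times X^*}$ for every $m^*\in X^*$, i.e.\ $Qw_k^*=i\,i^*w_k^*\rightharpoonup Qy^*=i\,i^*y^*$ weakly in $X$. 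If $\eta$ is the weak limit in $H$ of any subsequence of $(i^*w_k^*)_k$, applying the bounded injective operator $i$ gives $i\eta=i\,i^*y^*$, hence $\eta=i^*y^*$; since $(i^*w_k^*)_k$ is bounded, this forces $i^*w_k^*\rightharpoonup i^*y^*$ weakly in $H$.

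The final step, which I expect to be the main obstacle, is to upgrade this weak convergence in $H$ to norm convergence while keeping the weak$^*$ control: resolvents only deliver weak convergence of the $H$-projections, and it is not obvious one can stay inside $D(A^*)$ and get norm convergence. This is handled by Mazur's lemma applied to the tails of $(w_k^*)_k$: for each $m$, since $i^*y^*$ lies in the norm closure (= weak closure) of the convex set $i^*\big(\conv\{w_k^*:k\ge m\}\big)=\conv\{i^*w_k^*:k\ge m\}$, there is a finite convex combination $z_m^*:=\sum_{k\ge m}\lambda_k^{(m)}w_k^*\in D(A^*)$ with $|i^*z_m^*-i^*y^*|_H<1/m$. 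Then $\|z_m^*\|_{X^*}\le\|y^*\|_{X^*}$, $i^*z_m^*\to i^*y^*$ in $H$, and, because $z_m^*$ only involves indices $k\ge m$, $|\langle x,z_m^*\rangle_{X\times X^*}-\langle x,y^*\rangle_{X\times X^*}|\le\sup_{k\ge m}|\langle x,w_k^*\rangle_{X\times X^*}-\langle x,y^*\rangle_{X\times X^*}|\to0$ for every $x\in X$, so $z_m^*\to y^*$ weak$^*$. This produces the sequence required in the approximation property and completes the proof.
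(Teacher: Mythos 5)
Your proof is correct, and it takes a genuinely different route from the paper's. The paper starts from an \emph{arbitrary} weak$^*$-approximating sequence $(x_n^*)\subset D(A^*)$, observes via the reproducing identity $[i^*x^*,h]_H=\langle h,x^*\rangle_{X\times X^*}$ that $i^*x_n^*\rightharpoonup i^*x^*$ only \emph{weakly} in $H$, and then upgrades weak to strong convergence at the level of the gradients: it uses the uniform convexity of $\elle^p(X,\mu_\infty;H)$ and the Banach--Saks property to extract Ces\`aro means of the $\tilde f_n$ whose $H$-gradients converge in $\elle^p(X,\mu_\infty;H)$, and finally invokes the closedness of $D_H$ to identify the limit as $D_Hf$. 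You instead perform the weak-to-strong upgrade once and for all inside the Hilbert space $H$: the resolvent approximants $w_k^*=kR(k,A^*)y^*$ give a bounded, weak$^*$-convergent sequence in $D(A^*)$ with $i^*w_k^*\rightharpoonup i^*y^*$ weakly in $H$ (your identification of the weak limit via $Q=i\circ i^*$ and the injectivity of $i$ is sound), and Mazur's lemma applied to the tails produces convex combinations $z_m^*\in D(A^*)$ with $i^*z_m^*\to i^*y^*$ in norm while preserving the weak$^*$ control. After that, the convergence $f_m\to f$ in $W^{1,p}_H(X,\mu_\infty)$ is a direct dominated-convergence computation, with no appeal to Banach--Saks or to the closedness of $D_H$. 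Both arguments hinge on convexification (Mazur in $H$ for you, Banach--Saks in $\elle^p(X,\mu_\infty;H)$ for the paper); yours is more self-contained at that step, produces a single approximating sequence that works simultaneously for every $p\in(1,+\infty)$, and isolates a reusable approximation property of $D(A^*)$ relative to $i^*$, at the cost of the explicit resolvent construction. The paper's version is shorter because it never needs norm convergence of $i^*x_n^*$ in $H$, but its chosen subsequence depends on $p$ and on $f$.
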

\begin{proof}
We recall that $D(A^*)$ is weak$^*$-dense in $X^*$ (see \cite[Theorem 2.2]{K65}). This implies that for any $x^*\in X^*$ there exists a sequence $(x_m^*)\subset D(A^*)$ which weak$^*$ converges to $x^*$ as $m\rightarrow+\infty$, i.e., $\langle x,x^*_m\rangle_{X\times X^*}\rightarrow \langle x,x^*\rangle_{X\times X^*}$ as $m\rightarrow+\infty$ for any $x\in X$. 

We claim that for any $h\in H$ and any $x^*\in X^*$ we have $[i^*x^*,h]_H=\langle h,x^*\rangle_{X\times X^*}$. From the definition of $H$, this is true when $h=i^*y^*$ for some $y^*\in X^*$. For a generic $h\in H$, let $(x^*_n)\subset X^*$ be such that $i^*x^*_n\rightarrow h$ in $H$ as $n\rightarrow +\infty$. Since $H\subset X$ with continuous embedding, it follows that $(i\circ i^*)x^*_n\rightarrow h$ in $X$ as $n\rightarrow+\infty$. Then,
\begin{align}
\label{catena_ug_H}
[i^*x^*,h]_H
= & \lim_{n\rightarrow+\infty}[i^*x^*,i^*x^*_n]_H
= \lim_{n\rightarrow+\infty}\langle (i\circ i^*)x_n^*,x^*\rangle_{X\times X^*}
=\langle h,x^*\rangle_{X\times X^*},
\end{align}
and the claim is so proved.

Let $f\in \fcon_b^1(X)$. We only consider $f(x)=\varphi(\langle x,x^*\rangle_{X\times X^*})$ with $\varphi\in C^1_b(\R)$, $x^*\in X^*$ and $x\in X$, the general case easily follows from this one. We set $\tilde f_n:=\varphi(\langle x, x^*_n\rangle_{X\times X^*})$, where $(x_n^*)\subset D(A^*)$ is a sequence which weak$^*$ converges to $x^*$ as $n\rightarrow+\infty$. Then, $\tilde f_n(x)\rightarrow f(x)$ pointwise, and the dominated convergence theorem gives that $\tilde f_n\rightarrow f$ in $\elle^p(X,\mu_\infty)$ as $n\rightarrow+\infty$ for any $p\in[1,+\infty)$. 

Let us fix $p\in(1,+\infty)$. We show that there exists a sequence $(f_n)\subset \fcon_b^{1,1}(X)$ such that $f_n\rightarrow f$ in $\elle^p(X,\mu_\infty)$ and $D_Hf_n\rightarrow D_Hf$ in $\elle^p(X,\mu_\infty;H)$ as $n\rightarrow+\infty$. From the definition of $D_H$ we have
\begin{align*}
D_H\tilde f_n(x)=\varphi'(\langle x, x^*_n\rangle_{X\times X^*})i^*x^*_n, \quad x\in X, \ n\in\N.
\end{align*}
 From \eqref{catena_ug_H}, for any $h\in H$ we get
\begin{align*}
[i^*x_n^*,h]_H
=\langle h,x_n^*\rangle_{X\times X^*}\rightarrow \langle h, x^*\rangle_{X\times X^*}=[i^*x^*,h]_H, \quad n\rightarrow+\infty.
\end{align*}
This implies that $(i^*x^*_n)\subset H$ weakly converges in $H$ to $i^*x^*$ as $n\rightarrow+\infty$ and so the sequence $(i^*x^*_n)$ is bounded in $H$. Therefore, there exists a positive constant $c_p$ such that $\|\tilde f_n\|_{W^{1,p}_H(X,\mu_\infty)}\leq c_p$ for any $n\in\N$. From \cite[Chapter 3]{D75} we deduce that $\elle^p(X,\nu_\infty;H)$ is uniformly convex for any $p\in(1,+\infty)$, and so $\elle^p(X,\nu_\infty;H)$ has the Banach-Saks property (see e.g. \cite[Theorem 1, pag. 78]{D75}). We apply this property to the bounded sequence $(D_H\tilde f_n)$, hence there exists a subsequence $(D_H\tilde f_{k_n})\subset (D_H\tilde f_n)$ such that if we set 
\begin{align*}
f_n:=\sum_{i=1}^{n}\frac{\tilde f_{k_1}+\ldots+\tilde f_{k_n}}{n}, \quad n\in\N,
\end{align*}
the sequence
\begin{align*}
D_Hf_n:=\sum_{i=1}^{n}\frac{D_H\tilde f_{k_1}+\ldots+D_H \tilde f_{k_n}}{n}, \quad n\in\N,
\end{align*}
converges to a function $\Psi$ in $\elle^p(X,\mu_\infty;H)$ as $n\rightarrow+\infty$. Clearly, $f_n\rightarrow f$ as $n\rightarrow+\infty$ in $\elle^p(X,\mu_\infty)$. From the fact that $D_H$ is a closed operator on $\elle^p(X,\nu_\infty)$, we infer that $\Psi=D_Hf$. To conclude, we notice that $f_n\in \fcon_b^{1,1}(X)$ for any $n\in\N$.
\end{proof}

\begin{lemma}
\label{domV^*}
For any $x^*\in D(A^*)$, we have $Bi^*x^*\in D(V^*)$ and $V^*(Bi^*x^*)=i^*_\infty A^*x^*$.
\end{lemma}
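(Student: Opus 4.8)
The plan is to verify the adjoint relation directly from the definition of $V^*$. Since $V$ is densely defined on $H_\infty$, $V^*$ is single-valued, and a vector $w\in H$ belongs to $D(V^*)$ with $V^*w=z$ exactly when $[Vh,w]_H=[h,z]_{H_\infty}$ for all $h\in D(V)$. I would fix $x^*\in D(A^*)$, set $z:=i^*_\infty A^*x^*\in H_\infty$, and write a generic element of $D(V)$ as $h=i^*_\infty y^*$ with $y^*\in X^*$, so that $Vh=i^*y^*$. The whole statement then reduces to the identity
\begin{align*}
[i^*y^*,Bi^*x^*]_H=[i^*_\infty y^*,i^*_\infty A^*x^*]_{H_\infty},\qquad y^*\in X^*,
\end{align*}
which I would prove by computing the two sides separately.

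For the right-hand side, both $y^*$ and $A^*x^*$ lie in $X^*$, so \eqref{prop_covariance_op} gives $[i^*_\infty y^*,i^*_\infty A^*x^*]_{H_\infty}=\langle Q_\infty y^*,A^*x^*\rangle_{X\times X^*}$; by \eqref{caratt_cova_inf} the bilinear pairing $\langle Q_\infty f,g\rangle_{X\times X^*}=\int_X fg\,d\mu_\infty$ is symmetric in $f,g\in X^*$, hence this equals $\langle Q_\infty A^*x^*,y^*\rangle_{X\times X^*}$. For the left-hand side I would invoke the reproducing property of $H$ proved in \eqref{catena_ug_H}, namely $[i^*y^*,h]_H=\langle h,y^*\rangle_{X\times X^*}$ for every $h\in H$ (with $h$ read as an element of $X$ through the embedding $i:H\to X$), whence $[i^*y^*,Bi^*x^*]_H=\langle i(Bi^*x^*),y^*\rangle_{X\times X^*}$. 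Now, by Hypothesis \ref{ipo_RKH} the vector $i^*_\infty A^*x^*\in H_\infty$ belongs to $H$, and $B$ is defined precisely so that $Bi^*x^*=i^*_\infty A^*x^*$ in $H$; reading this equality inside $X$ gives $i(Bi^*x^*)=i_\infty(i^*_\infty A^*x^*)=(i_\infty\circ i^*_\infty)(A^*x^*)=Q_\infty A^*x^*$. Thus both sides equal $\langle Q_\infty A^*x^*,y^*\rangle_{X\times X^*}$, and the lemma follows with $V^*(Bi^*x^*)=i^*_\infty A^*x^*$.

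The only genuinely delicate point — the one I would spell out most carefully — is the chain $i(Bi^*x^*)=Q_\infty A^*x^*$: it requires keeping track of how $H$ and $H_\infty$ are embedded into $X$ (via $i$ and $i_\infty$), the exact meaning of the inclusion ``$i^*_\infty A^*x^*\in H$'' in Hypothesis \ref{ipo_RKH}, and the injectivity of $i$ (which follows from the density of $i^*D(A^*)$ in $H$). Once these identifications are pinned down, the rest is just the two reproducing-kernel identities together with the definition of $B$, and no estimates are needed.
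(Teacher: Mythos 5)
Your proposal is correct and follows essentially the same route as the paper: both verify the adjoint identity $[V(i_\infty^*y^*),Bi^*x^*]_H=[i_\infty^*y^*,i_\infty^*A^*x^*]_{H_\infty}$ directly from the definitions of $V$ and $B$ together with the reproducing-kernel identities \eqref{catena_ug_H} and \eqref{prop_covariance_op}. The only cosmetic difference is that you evaluate both sides to the common value $\langle Q_\infty A^*x^*,y^*\rangle_{X\times X^*}$, whereas the paper chains the equalities through the intermediate term $\langle i_\infty^*A^*x^*,y^*\rangle_{X\times X^*}$, which is the same quantity under the embedding $i_\infty$.
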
 
\begin{proof}
The result is contained in the proof of \cite[Theorem 2.3]{MV07}, but for reader's convenience we provide the simple proof. Let $x^*\in D(A^*)$. From the definition of $[\cdot,\cdot]_H$, that of $[\cdot,\cdot]_{H_\infty}$ and that of $V$, for any $y^*\in X^*$  we have
\begin{align*}
[Bi^*x^*,V(i^*_\infty y^*)]_H
= & [Bi^*x^*,i^*y^*]_H
= [i^*_\infty A^*x^*,i^*y^*]_H
= \langle i^*_\infty A^*x^*,y^*\rangle_{X\times X^*}
=[i^*_\infty A^*x^*,i^*_\infty y^*]_{H_\infty},
\end{align*}
which means that $Bi^*x^*\in D(V^*)$ and $V^*(Bi^*x^*)=i^*_\infty A^* x^*$.
\end{proof}

\begin{rmk}
If $Q=Q_\infty$, i.e., the Malliavin setting, $D_H$ is the Malliavin derivative, $V$ is the identity operator and for any $p\in[1,+\infty)$ the space $W^{1,p}_H(X,\mu_\infty)$ is the Sobolev space considered in \cite[Chapter 5]{Bog98}.
\end{rmk}




We are now ready to state the hypotheses on the weighted function $U$.

\begin{hyp}
\label{ventola}
$U$ is a proper $\|\cdot\|_X$-lower semi-continuous convex function which belongs to $W_H^{1,p}(X,\mu_\infty)$ for any $p\in[1,+\infty)$.
\end{hyp}

It is useful to notice that Hypothesis \ref{ventola} and \cite[Lemma 7.5]{AB06} imply that $e^{-U}\in W_H^{1,p}(X,\mu_\infty)$ for any $p\in[1,+\infty)$. This allows us to introduce the bounded measure 
\begin{align}
\label{weighted_measure}
\nu_\infty:=e^{-U}d\mu_\infty.
\end{align}

We prove that $D_H:\fcon_{b}^{1}(X)\rightarrow \elle^p(X,\nu_\infty;H)$ is closable in $\elle^p(X,\nu_\infty)$. To this aim we prove an intermediate result, which is the extension of \cite[Lemma 3.3]{GGV03} for the weighted measure $\nu_\infty$.

\begin{lemma}
Let $f\in \fcon_{b}^{1}(X)$ and let $h\in D(V^*)$. Then,
\begin{align}
\label{int_parti_peso}
\int_X[D_Hf,h]_H d\nu_\infty
=\int_X f\widehat{V^* h}d\nu_\infty+\int_Xf[D_HU,h]_Hd\nu_\infty.
\end{align}
\end{lemma}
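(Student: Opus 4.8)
The plan is to reduce the weighted integration by parts formula \eqref{int_parti_peso} to the unweighted one from \cite[Lemma 3.3]{GGV03}, applied not to $f$ but to the product $fe^{-U}$. First I would recall the unweighted statement: for $g\in \fcon_b^1(X)$ and $h\in D(V^*)$ one has $\int_X [D_Hg,h]_H\,d\mu_\infty = \int_X g\,\widehat{V^*h}\,d\mu_\infty$ (here $\widehat{V^*h}\in X_{\mu_\infty}^*$ is the element associated to $V^*h\in H_\infty$ via the identification of $H_\infty$ with a subspace of $\elle^2(X,\mu_\infty)$). Rewriting \eqref{int_parti_peso} by absorbing the density, the left-hand side becomes $\int_X [D_Hf,h]_H e^{-U}\,d\mu_\infty$ and the claim is that this equals $\int_X f\,\widehat{V^*h}\,e^{-U}\,d\mu_\infty + \int_X f[D_HU,h]_H e^{-U}\,d\mu_\infty$. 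Since $D_H$ is a derivation, the Leibniz rule gives $D_H(fe^{-U}) = e^{-U}D_Hf - fe^{-U}D_HU$ (at least formally; $e^{-U}\in W^{1,p}_H(X,\mu_\infty)$ by Hypothesis \ref{ventola} and \cite[Lemma 7.5]{AB06}, and $D_H e^{-U} = -e^{-U}D_HU$), so the right-hand side collapses to $\int_X \bigl(e^{-U}[D_Hf,h]_H - f[D_H e^{-U},h]_H\bigr)d\mu_\infty = \int_X [D_H(fe^{-U}),h]_H\,d\mu_\infty$, and the identity we want is exactly the unweighted formula applied to $g=fe^{-U}$.

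The technical point is that $fe^{-U}$ is not itself in $\fcon_b^1(X)$, so the unweighted Lemma 3.3 does not apply verbatim; one needs a density/closedness argument. The route I would take is: approximate $U$ by a sequence of "nice" functions — say $U_n\in \fcon_b^1(X)$ (or at least bounded smooth cylindrical functions) with $U_n\to U$ and $D_HU_n\to D_HU$ in the appropriate $\elle^p(X,\mu_\infty)$ and $\elle^p(X,\mu_\infty;H)$ norms, which is possible since $U\in W^{1,p}_H(X,\mu_\infty)$ for all $p$ and $\fcon_b^{1}(X)$ (hence cylindrical functions) is dense there. Then $f e^{-U_n}\in \fcon_b^1(X)$ up to composing with a smooth bounded modification of the exponential, so the unweighted formula holds for $g_n = f e^{-U_n}$:
\begin{align*}
\int_X [D_H(fe^{-U_n}),h]_H\,d\mu_\infty = \int_X fe^{-U_n}\,\widehat{V^*h}\,d\mu_\infty.
\end{align*}
Expanding $D_H(fe^{-U_n}) = e^{-U_n}D_Hf - fe^{-U_n}D_HU_n$ and passing to the limit $n\to\infty$ on both sides yields \eqref{int_parti_peso}. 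The limit passage requires uniform integrability: $\widehat{V^*h}\in \elle^2(X,\mu_\infty)$ (it lies in $X_{\mu_\infty}^*\subset \elle^q$ for all $q$ by Fernique), $f$ and $D_Hf$ are bounded since $f\in\fcon_b^1(X)$, and one controls $e^{-U_n}$ uniformly by passing to a subsequence with $U_n\to U$ a.e. together with, e.g., Vitali's theorem or a truncation $U_n = \max(U_n^{(0)}, -m)$ argument to keep $e^{-U_n}$ dominated; here convexity and lower semicontinuity of $U$ ensure $U$ is bounded below on bounded sets and the Gaussian integrability of $e^{-U}$-type quantities is not an issue because $e^{-U}\in\elle^1(\mu_\infty)$ and in fact in every $\elle^p$ by \cite[Lemma 7.5]{AB06}.

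The main obstacle I anticipate is precisely making the approximation of $U$ rigorous while keeping everything integrable: one cannot simply mollify $U$ in infinite dimensions, and $e^{-U_n}$ need not be bounded if $U_n$ is unbounded below, so the cleanest path is to first prove the formula for bounded convex $U$ (where $e^{-U}$ is bounded away from $0$ and $\infty$, and a direct cylindrical approximation of $U$ works), and then remove the boundedness assumption by a monotone truncation $U \wedge m \uparrow$ combined with the $W^{1,p}$-convergence $D_H(U\wedge m)\to D_H U$ and dominated convergence, using that $e^{-U\wedge m}$ decreases to $e^{-U}$ and is dominated by the integrable function $\max(1, e^{-U})$... actually $e^{-U\wedge m} \le e^{\max(-U,-m)} \le 1 + e^{-U}$, which is in every $\elle^p(\mu_\infty)$, so dominated convergence applies cleanly. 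Everything else is routine bookkeeping with the Leibniz rule for $D_H$ and the definition of $\widehat{V^*h}$.
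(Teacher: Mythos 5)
Your core reduction is exactly the paper's: use the Leibniz rule to rewrite the claimed identity as $\int_X[D_H(fe^{-U}),h]_H\,d\mu_\infty=\int_X fe^{-U}\,\widehat{V^*h}\,d\mu_\infty$ and invoke the unweighted formula of \cite[Lemma 3.3]{GGV03} with $g=fe^{-U}$. Where you diverge is in how you justify applying that formula to the non-cylindrical function $fe^{-U}$. The paper does this in one line: both sides of the unweighted identity are continuous in $g$ with respect to the $W^{1,p}_H(X,\mu_\infty)$ norm (for fixed $h$, since $\widehat{V^*h}$ lies in every $\elle^q(X,\mu_\infty)$), and $\fcon_{b}^{1}(X)$ is dense there, so the identity extends to every $g\in W^{1,p}_H(X,\mu_\infty)$; that $fe^{-U}$ belongs to this space with $D_H(fe^{-U})=e^{-U}D_Hf-fe^{-U}D_HU$ is then exactly \cite[Lemma 3.3]{MV08} combined with $e^{-U}\in W^{1,p}_H(X,\mu_\infty)$, which follows from Hypothesis \ref{ventola} and \cite[Lemma 7.5]{AB06}. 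Your alternative --- approximating $U$ by cylindrical functions and passing to the limit, after a preliminary reduction to bounded $U$ --- can be made to work, but it forces you to re-derive the product and chain rules through the approximation and to control $e^{-U_n}$ uniformly, which is precisely the bookkeeping you flag as delicate; note in particular that a proper convex lower semicontinuous $U$ need not be bounded below, so $U\wedge m$ alone is not bounded and you also need a truncation from below together with a chain rule for $D_H(U\wedge m)$ in the spirit of Lemma \ref{diff_modulo_parte_pos_neg}. Extending the unweighted identity in $g$ by density, rather than approximating $U$, sidesteps all of this.
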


\begin{proof}
From \cite[Lemma 3.3]{GGV03} we know that 
\begin{align}
\label{ibpf1}
\int_X[D_Hg,h]_H d\mu_\infty
=\int_X g\widehat{V^* h}d\mu_\infty,
\end{align}
for any $g\in \fcon_{b}^{1}(X)$ and any $h\in D(V^*)$. We would like to apply \eqref{ibpf1} with $g=fe^{-U}$. The density of $\fcon_{b}^{1}(X)$ in $W_H^{1,p}(X,\mu_\infty)$ for any $p\in[1,+\infty)$ implies that \eqref{ibpf1} holds true for any $g\in W_H^{1,p}(X,\mu_\infty)$ and $p\in[1,+\infty)$. From Hypothesis \ref{ventola} and \cite[Lemma 3.3]{MV08}, we infer that $D_H (fe^{-U})=(D_H f)e^{-U}-(D_HU)fe^{-U}$. Then, $f e^{-U}\in W_H^{1,p}(X,\mu_\infty)$ for any $p\in[1,+\infty)$ and we can apply \eqref{ibpf1} with $g=fe^{-U}$. We get
\begin{align*}
\int_X [D_H f,h]_Hd\nu_\infty= &
\int_X [D_H f,h]_He^{-U}d\mu_\infty
= \int_X [D_H(fe^{-U}),h]_Hd\mu_\infty+\int_X f[D_HU,h]_He^{-U}d\mu_\infty \\
 =&  \int_X fe^{-U}\widehat{V^* h}d\mu_\infty+\int_Xf[D_HU,h]_Hd\nu_\infty \\
=&  \int_X f\widehat{V^* h}d\nu_\infty+\int_Xf[D_HU,h]_Hd\nu_\infty.
\end{align*}
\end{proof}

Integration by parts formula \eqref{int_parti_peso} is the key tool to prove the closability of $D_H$ in $\elle^p(X,\nu_\infty)$ with $p\in(1,+\infty)$.

\begin{pro}
\label{clos_D_H_peso}
$D_H:\fcon_{b}^{1}(X)\rightarrow \elle^p(X,\nu_\infty;H)$ is closable in $\elle^p(X,\nu_\infty)$ for any $p\in(1,+\infty)$. We still denote by $D_H$ the closure of $D_H$ in $\elle^p(X,\nu_\infty)$ and we denote by $W^{1,p}_H(X,\nu_\infty)$ the domain of its closure. Finally, for any $p\in(1,+\infty)$ the space $W^{1,p}_H(X,\nu_\infty)$ endowed with the norm
\begin{align*}
\|f\|_{1,p,H}:=\|f\|_{\elle^p(X,\nu_\infty)}+\|D_Hf\|_{\elle^p(X,\nu_\infty;H)}, \quad f\in W^{1,p}_H(X,\nu_\infty),
\end{align*}
is a Banach space, and for $p=2$ it is a Hilbert space with inner product
\begin{align*}
\langle f,g\rangle_{W^{1,2}_H(X,\nu_\infty)}
:=\int_Xfgd\nu_\infty+\int_X[D_Hf,D_Hg]_Hd\nu_\infty, \quad f,g\in W^{1,2}_H(X,\nu_\infty).
\end{align*}
\end{pro}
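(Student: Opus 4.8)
The plan is to establish closability by the standard functional-analytic criterion: if $(f_n)\subset\fcon_b^1(X)$ satisfies $f_n\to 0$ in $\elle^p(X,\nu_\infty)$ and $D_Hf_n\to \Phi$ in $\elle^p(X,\nu_\infty;H)$, then $\Phi=0$. Once this is shown, the operator $D_H$ extends to a closed operator on $W^{1,p}_H(X,\nu_\infty)$ defined as the domain of the closure equipped with the graph norm $\|\cdot\|_{1,p,H}$; completeness of this domain is immediate from closedness, and for $p=2$ the stated bilinear form is visibly an inner product inducing the graph norm, so the space is Hilbert. Thus the only real work is the closability step, and for that the integration by parts formula \eqref{int_parti_peso} is the key tool, exactly as announced in the excerpt.

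To run the closability argument, I would test $\Phi$ against a rich enough family of $H$-valued functions. Fix $x^*\in D(A^*)$, so that by Lemma \ref{domV^*} we have $Bi^*x^*\in D(V^*)$ with $V^*(Bi^*x^*)=i^*_\infty A^*x^*$; and fix $g\in\fcon_b^1(X)$ (or more generally a bounded smooth cylindrical function with bounded derivatives). Apply \eqref{int_parti_peso} to $f=f_ng$, using that $\fcon_b^1(X)$ is an algebra and $D_H(f_ng)=gD_Hf_n+f_nD_Hg$, with $h=Bi^*x^*$:
\begin{align*}
\int_X g\,[D_Hf_n,Bi^*x^*]_H\,d\nu_\infty
=\int_X f_ng\,\widehat{V^*(Bi^*x^*)}\,d\nu_\infty
+\int_X f_ng\,[D_HU,Bi^*x^*]_H\,d\nu_\infty
-\int_X f_n\,[D_Hg,Bi^*x^*]_H\,d\nu_\infty.
\end{align*}
Now let $n\to\infty$. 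On the left, $D_Hf_n\to\Phi$ in $\elle^p(X,\nu_\infty;H)$ and $g[\cdot,Bi^*x^*]_H$ is a bounded functional, so the left side tends to $\int_X g\,[\Phi,Bi^*x^*]_H\,d\nu_\infty$. On the right, the first and third integrals tend to $0$ because $f_n\to 0$ in $\elle^p(X,\nu_\infty)$ while the factors $g\widehat{V^*(Bi^*x^*)}$ and $[D_Hg,Bi^*x^*]_H$ lie in $\elle^{p'}(X,\nu_\infty)$ — here one uses that $\widehat{V^*(Bi^*x^*)}=\widehat{i^*_\infty A^*x^*}\in X^*_{\mu_\infty}\subset \bigcap_{q<\infty}\elle^q(X,\mu_\infty)$ by Fernique's theorem, and boundedness of $e^{-U}$ is not needed since $\nu_\infty$ is a finite measure and $e^{-U}\in\elle^q(X,\mu_\infty)$ for all $q$. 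The middle integral likewise tends to $0$ once we check $[D_HU,Bi^*x^*]_H\in\elle^{p'}(X,\nu_\infty)$, which follows from $\|B\|_{\mathcal L(H)}\le c$, $|i^*x^*|_H$ finite, and $D_HU\in\elle^{p'}(X,\mu_\infty;H)$ together with Hypothesis \ref{ventola}; to be safe one may instead first take $g$ ranging over a sequence of truncations so that all products are genuinely bounded, then remove the truncation. Hence $\int_X g\,[\Phi,Bi^*x^*]_H\,d\nu_\infty=0$ for all such $g$, so $[\Phi,Bi^*x^*]_H=0$ $\nu_\infty$-a.e., and since $\nu_\infty$ and $\mu_\infty$ are mutually equivalent (as $e^{-U}>0$), also $\mu_\infty$-a.e.

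It remains to deduce $\Phi=0$ from $[\Phi(x),Bi^*x^*]_H=0$ a.e. for every $x^*\in D(A^*)$. Since $B+B^*=-\operatorname{Id}_H$ by Lemma \ref{propr_B}, the operator $B$ is invertible in $\mathcal L(H)$ (its numerical range lies in $\{\operatorname{Re} z=-\tfrac12\}$, hence $0$ is not in the spectrum), so $\{Bi^*x^*:x^*\in D(A^*)\}=B\,i^*(D(A^*))$ is dense in $H$ because $i^*(D(A^*))$ is dense in $H$ (stated in the excerpt) and $B$ is a homeomorphism. Therefore $[\Phi(x),k]_H=0$ for $k$ in a dense subset of $H$ and a.e. $x$, giving $\Phi=0$ in $\elle^p(X,\nu_\infty;H)$, which is the desired closability.

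The main obstacle I anticipate is purely a matter of integrability bookkeeping in the passage to the limit: one must ensure every product appearing after applying \eqref{int_parti_peso} to $f=f_ng$ sits in $\elle^{p'}(X,\nu_\infty)$ uniformly, which is why the cleanest route is to first prove the identity $\int_X g[\Phi,h]_H\,d\nu_\infty=0$ for $g\in\fcon_b^1(X)$ and $h=Bi^*x^*$, controlling the extra terms via Fernique's theorem and the membership $U\in W^{1,q}_H(X,\mu_\infty)$ for all $q$, rather than trying to apply the integration by parts formula to products that are only in $W^{1,p}_H$. Everything else — the algebra structure of $\fcon_b^1(X)$, invertibility and homeomorphism property of $B$, density of $i^*(D(A^*))$, equivalence of $\nu_\infty$ and $\mu_\infty$ — is either standard or already recorded above.
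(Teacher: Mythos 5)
Your proof is correct and takes essentially the same route as the paper: assume $f_n\to 0$ and $D_Hf_n\to\Phi$, apply the weighted integration-by-parts formula \eqref{int_parti_peso} to the products $f_ng$ against test vectors in $D(V^*)$, pass to the limit, and conclude $\Phi=0$ by density. The only (harmless) divergence is the choice of test family: the paper uses an orthonormal basis of $H$ contained in $D(V^*)$, supplied by the closability of $V$ via \cite[Theorem 3.4]{GGV03}, whereas you use $\{Bi^*x^*:x^*\in D(A^*)\}$ and establish its density through the invertibility of $B$ — both work, though in your version you should pass to a countable dense subfamily before discarding the $x^*$-dependent null sets.
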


\begin{proof}
Let us fix $p\in(1,+\infty)$. $(V,D(V))$ is closable from $H_\infty$ onto $H$, then from \cite[Theorem 3.4]{GGV03} it follows that $D(V^*)$ is weak dense in $H$ and there exists an orthonormal basis $\{v_n:n\in\N\}\subset D(V^*)$ of $H$. To show that $D_H$ is closable, let us consider a sequence $(f_n)\subset \fcon_{b}^{1}(X)$ such that $f_n\rightarrow 0$ and $D_Hf_n\rightarrow F$ in $\elle^p(X,\nu_\infty)$ and in $\elle^p(X,\nu_\infty;H)$, respectively. If we show that $F=0$ we infer the closability of $D_H$. To prove that $F=0$ let us consider $g\in \fcon_{b}^{1}(X)$. From \eqref{int_parti_peso} applied to the function $\tilde f_n:=f_n g\in \fcon_{b}^{1}(X)$ we have
\begin{align}
\int_X[D_Hf_n,v_j]_Hg d\nu_\infty 
=& \int_X[D_H(f_n g),v_j]_H d\nu_\infty-\int_X[D_Hg,v_j]_H f_nd\nu_\infty \notag \\
= & \int_Xf_ng\widehat{V^*v_j}d\nu_\infty+\int_X[D_HU,v_j]_H f_n g d\nu_\infty-\int_X[D_Hg,v_j]_H f_nd\nu_\infty, \label{int_parti_completa}
\end{align}
for any $j\in\N$. Letting $n\rightarrow+\infty$ in \eqref{int_parti_completa} we infer that
\begin{align*}
\int_X[F,v_j]_Hg d\nu_\infty 
= \lim_{n\rightarrow+\infty}\int_X[D_Hf_n,v_j]_Hg d\nu_\infty =0,
\end{align*}
for any $j\in\N$ and any $g\in\fcon_{b}^{1}(X)$. The density of $\fcon_{b}^1(X)$ in $\elle^p(X,\nu_\infty)$ implies that $[F(x),v_j]_H=0$ for $\nu_\infty$-a.e. $x\in X$ for any $j\in\N$. This gives that $F(x)=0$ for $\nu_\infty$-a.e. $x\in X$.
The second part of the statement follows from standard arguments.
\end{proof}

\begin{rmk}
\label{eq_chiusura_dom_C2}
Arguing as in Lemma \ref{convergenza_fcon}, it follows that the space $\fcon_b^{k,1}(X)$ is dense in $W^{1,p}_H(X,\nu_\infty)$ for any $k\in\N\cup\{\infty\}$ and any $p\in(1,+\infty)$.
\end{rmk}

\section{The perturbed nonsymmetric Ornstein-Uhlenbeck operator}
\label{nonsymm_OU_op_section}
 \subsection{The Ornstein-Uhlenbeck operator in \texorpdfstring{$\elle^2(X,\nu_\infty)$}{}}
\label{nonsymm_OU_op}

We introduce the nonsymmetric Ornstein-Uhlenbeck operator by means of the theory of bilinear Dirichlet forms. Let
\begin{align}
\label{rough}
\mathcal E(u,v):=-\int_X [BD_Hu,  D_Hv]_H d\nu_\infty,  \quad u,v\in\mathcal D,
\end{align}
with domain $\mathcal D=W^{1,2}_H(X,\nu_\infty)$. From Lemma \ref{propr_B} we get
\begin{align}
\mathcal E(u,u)
= & -\int_X[ BD_Hu,D_Hu]_Hd\nu_\infty
=\frac 12 \int_X[D_Hu,D_Hu]_H d\nu_\infty =\frac12 \|D_Hu\|_{\elle^2(X,\nu_\infty;H)}^2, \quad u\in\mathcal D,
\label{topolino}
\end{align}
which implies that $\mathcal E$ is positive definite. If we consider the symmetric part $\overline{ \mathcal E}(u,v):=\frac12(\mathcal E(u,v)+\mathcal E(v,u))$ of $\mathcal E$, with $u,v\in \mathcal D$, we have
\begin{align*}
\overline {\mathcal E}(u,v)
=&- \frac12\int_X([BD_Hu,D_Hv]_H+[BD_Hv,D_Hu]_H)d\nu_\infty \\
= &- \frac12\int_X([BD_Hu,D_Hv]_H+[B^*D_Hu,D_Hv]_H)d\nu_\infty=\frac12\int_X[D_Hu,D_Hv]d\nu_\infty.
\end{align*}
Proposition \ref{clos_D_H_peso} implies that $(\overline{\mathcal E},\mathcal D)$ is a symmetric closed form on $\elle^2(X,\nu_\infty)$. Finally, for any $u,v\in \mathcal D$, from Hypothesis \ref{ipo_RKH} we have
\begin{align*}
|\mathcal E(u,v)|
\leq & \int_X|[BD_Hu,D_Hv]_H|d\nu_\infty
\leq  \|B\|_{\mathcal L(H)}\int_X|D_Hu|_H |D_Hv|_Hd\nu_\infty \\
\leq & c\ \!\|D_Hu\|_{\elle^2(X,\nu_\infty;H)} \|D_Hv\|_{\elle^2(X,\nu_\infty;H)}
=4c\ \!\mathcal E(u,u)^{1/2}\mathcal E(v,v)^{1/2}.
\end{align*}
This implies that $({\mathcal E},\mathcal D)$ satisfies the {\it strong} (and hence the {\it weak}) {\it sector condition} (see \cite[Chapter 1, Section 2 and Exercise 2.1]{MR92}) and therefore $({\mathcal E},\mathcal D)$ is a coercive closed form on $\elle^2(X,\nu_\infty)$.
According to \cite[Chapter 1]{MR92} we define a densely defined operator $L_2$ as follows:
\begin{align}
\label{cerniera}
\left\{
\begin{array}{ll}
D(L_2) :=\Big\{u\in W^{1,2}_H(X,\nu_\infty):\ {\textrm{there exists $g\in \elle^2(X,\nu_\infty)$ such that }}\\
\qquad \qquad \quad \qquad \qquad \qquad \qquad \displaystyle \mathcal E(u,v)=-\int_X gvd\nu_\infty, \ \forall v\fcon_{b}^{1}(X)\Big\}, \\
L_2u :=g.
\end{array}
\right.
\end{align}

\begin{rmk}
\label{mirto}
From \cite[Chapter 1, Sections 1\&2]{MR92} it follows that $L_2$ generates a strongly continuous contraction semigroup on $\elle^2(X,\nu_\infty)$ which we denote by $(T_2(t))_{t\geq0}$. In particular, $1\in \rho(L_2)$. The operator $L_2$ is called {\textit{perturbed Ornstein-Uhlenbeck operator in $\elle^2(X,\nu_\infty)$}} and the associated semigroup $(T_2(t))_{t\geq0}$ is called {\textit{perturbed Ornstein-Uhlenbeck semigroup in $\elle^2(X,\nu_\infty)$}}.
\end{rmk}

In the following we will need of the adjoint operator $L_2^*$ of $L_2$. We recall that formally $L_2^*$ is defined as follows:
\begin{align*}
\left\{
\begin{array}{l}
\displaystyle D(L_2^*):= \Big\{v\in \elle^2(X,\nu_\infty):\exists g\in \elle^2(X,\nu_\infty) \textrm{ such that}\vspace{1mm} \\
\hphantom{D(L_2^*):=\big\{v\in \elle^2(X,\nu_\infty):}
\displaystyle \int_X gud\nu_\infty=\int_XvL_2ud\nu_\infty, \quad u\in D(L_2)\Big\}, \\
L_2^*v :=g.
\end{array}
\right.
\end{align*}
Moreover, let us consider the adjoint semigroup $(T_2^*(t))_{t\geq0}$ of $(T_2(t))_{t\geq0}$. Even if in general it is not a strongly continuous semigroup, \cite[Chapter 1, Theorem 2.8]{MR92} ensures that $(T_2^*(t))_{t\geq0}$ is strongly continuous and $L_2^*$ is its generator. Further, \cite[Chapter 1, Corollary 2.10]{MR92} implies that $D(L_2^*)\subset \mathcal D=W^{1,2}_H(X,\nu_\infty)$.

We give a characterization of $L_2^*$ in terms of bilinear form on $\elle^2(X,\nu_\infty)$. Let us introduce the nonsymmetric bilinear form
\begin{align}
\label{nonsym_bil_form_dual}
\widetilde {\mathcal E}(u,v):=-\int_X[B^*D_Hu,D_Hv]_Hd\nu_\infty, \quad u,v\in\mathcal D,
\end{align}
with domain $\mathcal D:=W^{1,2}_H(X,\nu_\infty)$. Arguing as for $\mathcal E$ it is possible to prove that $\widetilde {\mathcal E}$ is a coercive closed form on $\elle^2(X,\nu_\infty)$ and therefore the operator $\widetilde L_2$ defined as
\begin{align}
\label{del_op_adj}
\left\{
\begin{array}{ll}
D(\widetilde L_2) :=\Big\{u\in W^{1,2}_H(X,\nu_\infty):\ {\textrm{there exists $g\in \elle^2(X,\nu_\infty)$ such that }}\\
\qquad \qquad \quad \qquad \qquad \qquad \qquad \displaystyle \widetilde{\mathcal E}(u,v)=-\int_X gvd\nu_\infty, \ \forall v\fcon_{b}^{1}(X)\Big\}, \\
\widetilde L_2u :=g,
\end{array}
\right.
\end{align}
generates a strongly continuous semigroup $(\widetilde T_2(t))_{t\geq0}$ on $\elle^2(X,\nu_\infty)$. The next result shows that $\widetilde L_2$ is indeed the adjoint operator of $L_2$ and $(\widetilde T_2(t))_{t\geq0}$ is the adjoint semigroup of $(T_2(t))_{t\geq0}$.
\begin{pro}
$D(\widetilde L_2)=D(L_2^*)$ and $\widetilde L_2u=L_2^*u$ for any $u\in D(L_2^*)$. Therefore, $\widetilde T_2(t)=T_2^*(t)$ for any $t\geq0$.
\end{pro}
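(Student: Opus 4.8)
The plan is to reduce the proposition to the algebraic fact that $\widetilde{\mathcal E}$ is the co-form of $\mathcal E$, namely $\widetilde{\mathcal E}(u,v)=\mathcal E(v,u)$ for all $u,v\in\mathcal D$. This is immediate from the symmetry of $[\cdot,\cdot]_H$ and the definition of $B^*$, since
\begin{align*}
\widetilde{\mathcal E}(u,v)=-\int_X[B^*D_Hu,D_Hv]_Hd\nu_\infty=-\int_X[D_Hu,BD_Hv]_Hd\nu_\infty=-\int_X[BD_Hv,D_Hu]_Hd\nu_\infty=\mathcal E(v,u).
\end{align*}
The statement is then the weighted, infinite-dimensional version of the classical principle that the operator attached to the adjoint form is the adjoint of the operator attached to the form (see \cite[Chapter 1]{MR92}), and I would give a direct proof.

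First I would upgrade the defining relations \eqref{cerniera} and \eqref{del_op_adj} from cylindrical test functions to the whole form domain $\mathcal D$. Since $\fcon_{b}^{1}(X)$ is dense in $W^{1,2}_H(X,\nu_\infty)$ by Proposition \ref{clos_D_H_peso}, and both $\mathcal E$ and $\widetilde{\mathcal E}$ are continuous on $\mathcal D\times\mathcal D$ for the norm of $W^{1,2}_H(X,\nu_\infty)$ (this is the strong sector condition, combined with \eqref{topolino}), a limiting argument gives $\mathcal E(w,v)=-\int_X(L_2w)\,v\,d\nu_\infty$ for every $w\in D(L_2)$ and $v\in\mathcal D$, and likewise $\widetilde{\mathcal E}(u,v)=-\int_X(\widetilde L_2u)\,v\,d\nu_\infty$ for every $u\in D(\widetilde L_2)$ and $v\in\mathcal D$; here one uses that $D(L_2),D(\widetilde L_2)\subset\mathcal D$ by the very definitions \eqref{cerniera}--\eqref{del_op_adj}.

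Then I would prove the inclusion $\widetilde L_2\subseteq L_2^*$: fixing $u\in D(\widetilde L_2)$ and $w\in D(L_2)$, choosing $v=w$ in the relation for $\widetilde{\mathcal E}$ and $v=u$ in the relation for $\mathcal E$, and using $\widetilde{\mathcal E}(u,w)=\mathcal E(w,u)$, one gets $\int_X(\widetilde L_2u)\,w\,d\nu_\infty=\int_X u\,(L_2w)\,d\nu_\infty$; since $w\in D(L_2)$ is arbitrary, the definition of $L_2^*$ yields $u\in D(L_2^*)$ and $L_2^*u=\widetilde L_2u$. Finally I would upgrade this to an equality by a generation argument: both $L_2^*$ and $\widetilde L_2$ generate strongly continuous contraction semigroups on the reflexive space $\elle^2(X,\nu_\infty)$ — for $L_2^*$ by \cite[Chapter 1, Theorem 2.8]{MR92}, and for $\widetilde L_2$ because $\widetilde{\mathcal E}$ is a coercive closed form, so that $1\in\rho(\widetilde L_2)$ as in Remark \ref{mirto}. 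Hence $1\in\rho(L_2^*)\cap\rho(\widetilde L_2)$; given $v\in D(L_2^*)$ I would pick $u\in D(\widetilde L_2)$ with $(I-\widetilde L_2)u=(I-L_2^*)v$, so that $(I-L_2^*)u=(I-L_2^*)v$ by $\widetilde L_2\subseteq L_2^*$, and injectivity of $I-L_2^*$ forces $v=u\in D(\widetilde L_2)$. Thus $D(\widetilde L_2)=D(L_2^*)$ and $\widetilde L_2=L_2^*$. The identity $\widetilde T_2(t)=T_2^*(t)$ then follows because a strongly continuous semigroup is uniquely determined by its generator, and $(T_2^*(t))_{t\geq0}$ is the adjoint semigroup of $(T_2(t))_{t\geq0}$ by \cite[Chapter 1, Theorem 2.8]{MR92}.

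The step I expect to require the most care is the density/continuity passage of the second paragraph: one must check carefully that the sector condition yields boundedness of $\mathcal E$ and $\widetilde{\mathcal E}$ for the full $W^{1,2}_H(X,\nu_\infty)$-norm and that $D(L_2)$ and $D(\widetilde L_2)$ are contained in $\mathcal D$, so that the substitutions $v=w$ and $v=u$ are admissible in the extended identities; once this is in place, the remaining steps are formal.
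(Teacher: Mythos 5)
Your proof is correct. The first half — the inclusion $\widetilde L_2\subseteq L_2^*$, obtained from the identity $\widetilde{\mathcal E}(u,v)=\mathcal E(v,u)$ together with the extension of the defining relations \eqref{cerniera} and \eqref{del_op_adj} from test functions in $\fcon_{b}^{1}(X)$ to all of $\mathcal D$ by density and form-boundedness — is essentially the paper's argument (the paper performs this extension implicitly; you are right that it deserves a word). Where you genuinely diverge is in the reverse inclusion. The paper takes $u\in D(L_2^*)$, invokes \cite[Chapter 1, Corollary 2.10]{MR92} to know a priori that $D(L_2^*)\subset\mathcal D=W^{1,2}_H(X,\nu_\infty)$, rewrites $\int_X L_2^*u\,v\,d\nu_\infty=-\widetilde{\mathcal E}(u,v)$ for $v\in D(L_2)$, and then uses the density of $D(L_2)$ in $\mathcal D$ with respect to the form norm (\cite[Chapter 1, Theorem 2.13(ii)]{MR92}) to conclude directly from \eqref{del_op_adj} that $u\in D(\widetilde L_2)$. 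You instead run the standard resolvent argument: an inclusion of generators $\widetilde L_2\subseteq L_2^*$ with the common point $1$ in both resolvent sets forces equality, since $I-\widetilde L_2$ is surjective and $I-L_2^*$ is injective. Both routes are sound; yours avoids the two auxiliary facts from \cite{MR92} (and in fact recovers $D(L_2^*)\subset\mathcal D$ as a corollary rather than an input), at the modest price of needing $1\in\rho(L_2^*)$, which is anyway available because $(T_2^*(t))_{t\geq0}$ is a strongly continuous contraction semigroup with generator $L_2^*$.
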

\begin{proof}
Let $u\in D(\widetilde L_2)$. For any $v\in D(L_2)$ we have
\begin{align*}
\int_X\widetilde L_2 u vd\nu_\infty
= & \int_X[B^*D_Hu,D_Hv]_Hd\nu_\infty
= \int_X[BD_Hv,D_Hu]d\nu_\infty
= \int_XL_2vud\nu_\infty.
\end{align*}
From the definition of $L_2^*$ it follows that $u\in D(L_2^*)$ and $L_2^*u=\widetilde L_2u$. To prove the converse inclusion, let $u\in D(L_2^*)$. We recall that $u\in W^{1,2}_H(X,\nu_\infty)$. For any $v\in D(L_2)$ we have
\begin{align}
\label{DLtilde_cont_DLadj}
\int_XL_2^* uvd\nu_\infty
= & \int_XuL_2vd\nu_\infty
=\int_X[BD_Hv,D_Hu]_Hd\nu_\infty
= \int_X[B^*D_Hu,D_Hv]_Hd\nu_\infty=-\widetilde {\mathcal E}(u,v).
\end{align}
From \cite[Chapter 1, Theorem 2.13(ii)]{MR92} it follows that $D(L_2)$ is dense in $\mathcal D=W^{1,2}_H(X,\nu_\infty)$. Therefore, \eqref{DLtilde_cont_DLadj} gives $u\in D(\widetilde L_2)$ and $\widetilde L_2u=L_2^*u$.
\end{proof}

We conclude this subsection by showing that $\fcon_{b}^{2,1}(X)\subset D(L_2)$ and for any $u\in \fcon_{b}^{2,1}(X)$ an explicit formula for $L_2u$ is available. To this aim, we recall the definition of Trace class operator on $\mathcal L(H)$: given a nonnegative operator $\Phi\in \mathcal L(H)$, we say that $\Phi$ is a trace class operator if 
\begin{align*}
\sum_{n=1}^\infty[\Phi h_n,h_n]_H<+\infty,
\end{align*}
where $\{h_n:n\in\N\}$ is any orthonormal basis of $H$. We define the Trace ${\rm Tr}[\Phi]$ of $\Phi$ as
\begin{align*}
{\rm Tr}[\Phi]_H:=\sum_{n=1}^\infty[\Phi h_n,h_n]_H.
\end{align*}

For any $f\in \fcon_{b}^{2,1}(X)$ such that $f(x)=\varphi(\langle x, x^*_1\rangle_{X\times X^*},\ldots,\langle x,x^*_n\rangle_{X\times X^*})$ for some $\varphi\in C_b^2(\R^n)$, $x_i^*\in D(A^*)$, $i=1,\ldots,n$ and $x\in X$, we define the second order derivative along $H$ as
\begin{align*}
D^2_Hf(x):=\sum_{j,k=1}^n\frac{\partial^2\varphi}{\partial \xi_j\xi_k}(\langle x, x^*_1\rangle_{X\times X^*},\ldots,\langle x,x^*_n\rangle_{X\times X^*})Qx^*_j\otimes Qx_k^*.
\end{align*}
$D^2_Hf(x)$ is a trace class operator for any $x\in X$ and
\begin{align*}
{\rm Tr}[D^2_Hf(x)]_H
=\sum_{j,k=1}^n\langle Q x^*_j,x^*_k\rangle_{X\times X^*}\frac{\partial^2 \varphi}{\partial\xi_j\partial \xi_k}(\langle x, x^*_1\rangle_{X\times X^*},\ldots,\langle x,x^*_n\rangle_{X\times X^*}), \quad x\in X.
\end{align*}

\begin{pro}
\label{gruppo}
$\fcon_{b}^{2,1}(X)\subset D(L_2)$ and for any $u\in\fcon_{b}^{2,1}(X)$ we have
\begin{align}
\label{pingpong}
L_2u(x)=\frac12{\rm Tr}[D_H^2u(x)]_H+\langle x,A^*Du(x)\rangle_{X\times X^*} +[BD_Hu(x),D_HU(x)]_H, \quad \nu_\infty{\rm -a.e.}\ x\in X.
\end{align}
\end{pro}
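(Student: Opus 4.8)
The plan is to verify directly the defining condition of $D(L_2)$ given in \eqref{cerniera}. Fix $u\in\fcon_{b}^{2,1}(X)$ with $u(x)=\varphi(\langle x,x_1^*\rangle_{X\times X^*},\ldots,\langle x,x_n^*\rangle_{X\times X^*})$, $\varphi\in C_b^2(\R^n)$, $x_i^*\in D(A^*)$, and put $w_j(x):=\frac{\partial\varphi}{\partial\xi_j}(\langle x,x_1^*\rangle_{X\times X^*},\ldots,\langle x,x_n^*\rangle_{X\times X^*})$, so that $w_j\in\fcon_{b}^{1,1}(X)$ and $D_Hu=\sum_{j=1}^n w_j\,i^*x_j^*$, hence $BD_Hu=\sum_{j=1}^n w_j\,Bi^*x_j^*$. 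By Lemma \ref{domV^*} each $Bi^*x_j^*$ lies in $D(V^*)$ with $V^*(Bi^*x_j^*)=i_\infty^*A^*x_j^*$, so for $v\in\fcon_{b}^{1}(X)$ I can apply the weighted integration by parts formula \eqref{int_parti_peso} with $f=vw_j\in\fcon_{b}^{1}(X)$ and $h=Bi^*x_j^*$. Using the product rule $D_H(vw_j)=w_jD_Hv+vD_Hw_j$, symmetry of $[\cdot,\cdot]_H$, and summing over $j$, one rewrites $\mathcal E(u,v)=-\int_X[BD_Hu,D_Hv]_H\,d\nu_\infty$ as $-\int_X gv\,d\nu_\infty$, where
\begin{align*}
g=\sum_{j=1}^n\Big(w_j\,\widehat{V^*(Bi^*x_j^*)}+w_j\,[D_HU,Bi^*x_j^*]_H-[D_Hw_j,Bi^*x_j^*]_H\Big).
\end{align*}
It then remains to identify $g$ with the right-hand side of \eqref{pingpong} and to check $g\in\elle^2(X,\nu_\infty)$.

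For the identification I would treat the three summands separately. First, by Lemma \ref{domV^*} and the identity $\widehat{Q_\infty f}=R^*f=j(f)$ recorded after \eqref{prod_scal_Hinf} (together with $i_\infty^*f=Q_\infty f$, which follows from $Q_\infty=i_\infty\circ i_\infty^*$ and the injectivity of $i_\infty$), one gets $\widehat{V^*(Bi^*x_j^*)}(x)=\widehat{i_\infty^*A^*x_j^*}(x)=\langle x,A^*x_j^*\rangle_{X\times X^*}$; since $Du(x)=\sum_j w_j(x)x_j^*$, summing against $w_j$ yields $\langle x,A^*Du(x)\rangle_{X\times X^*}$. Second, linearity of $B$ and bilinearity of $[\cdot,\cdot]_H$ give $\sum_j w_j[D_HU,Bi^*x_j^*]_H=[D_HU,BD_Hu]_H=[BD_Hu,D_HU]_H$. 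Third, writing $D_Hw_j=\sum_k\frac{\partial^2\varphi}{\partial\xi_j\partial\xi_k}(\cdots)\,i^*x_k^*$, the last sum equals $-\sum_{j,k}\frac{\partial^2\varphi}{\partial\xi_j\partial\xi_k}(\cdots)[i^*x_k^*,Bi^*x_j^*]_H$; here I would use $B+B^*=-Id_H$ (Lemma \ref{propr_B}) and the symmetry of $\frac{\partial^2\varphi}{\partial\xi_j\partial\xi_k}$ in $j,k$ to symmetrize $B$ away, obtaining $\sum_{j,k}\frac{\partial^2\varphi}{\partial\xi_j\partial\xi_k}(\cdots)[i^*x_k^*,Bi^*x_j^*]_H=-\tfrac12\sum_{j,k}\frac{\partial^2\varphi}{\partial\xi_j\partial\xi_k}(\cdots)[i^*x_k^*,i^*x_j^*]_H$. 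Since $[i^*x_k^*,i^*x_j^*]_H=\langle Qx_j^*,x_k^*\rangle_{X\times X^*}$ by Definition \ref{RKHS} and $Q=i\circ i^*$, the third summand is $\tfrac12\,{\rm Tr}[D_H^2u(x)]_H$. Adding the three pieces gives exactly \eqref{pingpong}.

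Finally I must check $g\in\elle^2(X,\nu_\infty)$, which is where Hypothesis \ref{ventola} enters. The trace term is bounded (as $\varphi\in C_b^2(\R^n)$), hence lies in $\elle^\infty\subset\elle^2(X,\nu_\infty)$ since $\nu_\infty$ is finite. For $\langle x,A^*Du(x)\rangle_{X\times X^*}=\sum_j w_j(x)\langle x,A^*x_j^*\rangle_{X\times X^*}$ the $w_j$ are bounded and $\langle\cdot,A^*x_j^*\rangle_{X\times X^*}\in\elle^q(X,\mu_\infty)$ for every $q\in[1,+\infty)$ by Fernique's theorem (recall $A^*x_j^*\in X^*$ because $x_j^*\in D(A^*)$); as $e^{-U}\in\elle^q(X,\mu_\infty)$ for all $q$ by Hypothesis \ref{ventola} and \cite[Lemma 7.5]{AB06}, Hölder's inequality gives square-integrability against $\nu_\infty=e^{-U}\mu_\infty$. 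Likewise $|[BD_Hu,D_HU]_H|\le\|B\|_{\mathcal L(H)}|D_Hu|_H|D_HU|_H$ with $|D_Hu|_H$ bounded and $|D_HU|_H\in\elle^q(X,\mu_\infty)$ for all $q$ (Hypothesis \ref{ventola}), and Hölder again against $e^{-U}$ finishes the estimate. Hence $g\in\elle^2(X,\nu_\infty)$ and $\mathcal E(u,v)=-\int_X gv\,d\nu_\infty$ for all $v\in\fcon_{b}^{1}(X)$, so $u\in D(L_2)$ and $L_2u=g$. The main obstacle I anticipate is the careful bookkeeping of the RKHS identifications ($H$, $H_\infty$, their duals, the ``hats'', and the operator $V$) together with the symmetrization step producing the factor $\tfrac12$ in front of the trace; the $\elle^2$-integrability is routine once Hypothesis \ref{ventola} is used through Hölder's inequality.
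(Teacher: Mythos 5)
Your proof is correct and follows essentially the same route as the paper: apply the weighted integration-by-parts formula \eqref{int_parti_peso} termwise with $h=Bi^*x_j^*\in D(V^*)$ (via Lemma \ref{domV^*}), identify the three resulting summands with the trace, drift and perturbation terms, and verify that the sum lies in $\elle^2(X,\nu_\infty)$. The only difference is that you write out explicitly the symmetrization through $B+B^*=-Id_H$ producing the factor $\tfrac12$ and the H\"older/Fernique integrability estimate, steps the paper delegates to the computations of \cite[Theorem 2.3]{MV07} and to Hypothesis \ref{ventola}.
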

\begin{proof}
Let $u\in \fcon_{b}^{2,1}(X)$ be such that $u(x)=\varphi(\langle x, x^*_1\rangle_{X\times X^*},\ldots,\langle x,x^*_m\rangle_{X\times X^*})$, with $\varphi\in C_b^2(\R^m)$, $x_i^*\in D(A^*)$ for $i=1,\ldots,m$ and $x\in X$, and let $v\in \fcon_{b}^{1}(X)$. From Lemma \ref{domV^*} for any $x^*\in D(A^*)$ we have $Bi^*x^*\in D(V^*)$ and $V^*(Bi^*x^*)=i^*_\infty A^*x^*$. The form of $u$, integration by parts formula \eqref{int_parti_peso} with $f=uv$ and the computations in the proof of \cite[Theorem 2.3]{MV07} give
\begin{align*}
\mathcal E(u &,v)
= -\int_X [BD_Hu(x),D_Hv(x)]_H \nu_\infty(dx) \\
= & - \sum_{n=1}^m\int_X[D_Hv(x),Bi^*x^*_n]_H\frac{\partial \varphi}{\partial \xi_n}(\langle x, x^*_1\rangle_{X\times X^*},\ldots,\langle x,x^*_m\rangle_{X\times X^*}) \nu_\infty(dx) \\
=& \sum_{n=1}^m\int_X v(x)\Big(\sum_{j=1}^m\frac{\partial^2 \varphi}{\partial \xi_n\partial \xi_j}[i^*x^*_j,Bi^*x^*_n]_H
- \frac{\partial \varphi}{\partial \xi_n}(\langle x, x^*_1\rangle_{X\times X^*},\ldots,\langle x,x^*_m\rangle_{X\times X^*})\widehat{V^*Bi^*x^*_n}(x) \\
& \qquad \qquad -[D_HU(x),Bi^*x^*_n]_H  \frac{\partial \varphi}{\partial \xi_n}(\langle x, x^*_1\rangle_{X\times X^*},\ldots,\langle x,x^*_m\rangle_{X\times X^*})
\Big)  \nu_\infty(dx) \\
=&- \int_X v(x)\Big(\frac12{\rm Tr}[D_H^2 u(x)]_H+\langle x,A^*Du(x)\rangle_{X\times X^*}  +[BD_Hu(x),D_HU(x)]_H\Big)  \nu_\infty(dx).
\end{align*}
Since
\begin{align*}
x\mapsto \frac12{\rm Tr}[D_H^2u(x)]_H+\langle x,A^*Du(x)\rangle_{X\times X^*} +[BD_Hu(x),D_HU(x)]_H\in \elle^2(X,\nu_\infty),
\end{align*}
it follows that $u\in D(L_2)$ and
\begin{align*}
L_2u(x)=\frac12{\rm Tr}[D_H^2u(x)]_H+\langle x,A^*Du(x)\rangle_{X\times X^*} +[BD_Hu(x),D_HU(x)]_H, \quad \nu_\infty{\textup{-a.e.} }\ x\in X.
\end{align*}
\end{proof}

\subsection{The nonsymmetric Ornstein-Uhlenbeck operator in \texorpdfstring{$\elle^p(X,\nu_\infty)$}{Lp}}
In this subsection we consider the realization of the semigroup $(T_2(t))_{t\geq0}$ in $\elle^p(X,\nu_\infty)$ with $p\in(1,+\infty)$, and we show some important properties of the perturbed Ornstein-Uhlenbeck semigroup in $\elle^p(X,\nu_\infty)$. We need a technical lemma, which is the analogous of \cite[Lemma 2.7]{DPL14} in our setting, about the differentiability of the positive and negative part of a function $u\in W^{1,2}_H(X,\nu_\infty)$.

\begin{lemma}
\label{diff_modulo_parte_pos_neg}
Let $u\in W^{1,2}_H(X,\nu_\infty)$. Then, $|u|, u^+,u^-\in W^{1,2}_H(X,\nu_\infty)$ and $D_H|u|=\sign (u)D_Hu$. Further, $D_Hu$ vanishes on $u^{-1}(0)$ $\nu_\infty$-a.e.; $D_H(u^+)=\mathds 1_{\{u>0\}}D_Hu$ and $D_H(u^-)=-\mathds 1_{\{u<0\}}D_Hu$.
\end{lemma}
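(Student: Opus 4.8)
The plan is to prove the three assertions about $|u|$, $u^+$, $u^-$ by the standard approximation-and-truncation argument, transported to the weighted Gaussian setting via the integration-by-parts formula \eqref{int_parti_peso}. First I would reduce to $u \in \fcon_b^{1}(X)$, or rather to the cylindrical core: by Proposition \ref{clos_D_H_peso} the space $\fcon_b^{1}(X)$ is dense in $W^{1,2}_H(X,\nu_\infty)$, so I pick a sequence $(u_k) \subset \fcon_b^{1}(X)$ with $u_k \to u$ in $W^{1,2}_H(X,\nu_\infty)$, and (passing to a subsequence) with $u_k \to u$ and $D_H u_k \to D_H u$ $\nu_\infty$-a.e. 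The goal is to show $|u_k|$ is bounded in $W^{1,2}_H(X,\nu_\infty)$ with $D_H|u_k| = \sign(u_k) D_H u_k$, and then to pass to the limit; closability of $D_H$ on $\elle^2(X,\nu_\infty)$ then forces $|u| \in W^{1,2}_H(X,\nu_\infty)$ with the claimed gradient.

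The approximation of the absolute value goes through the smooth functions $\psi_\eps(t) := \sqrt{t^2 + \eps^2} - \eps$, which satisfy $\psi_\eps(t) \to |t|$, $0 \le \psi_\eps'(t) \to \sign(t)$ pointwise as $\eps \to 0$, and $|\psi_\eps'| \le 1$. For $u_k \in \fcon_b^{1}(X)$, the composition $\psi_\eps \circ u_k$ lies again in $\fcon_b^{1}(X)$ (it is $\tilde\psi_\eps \circ \varphi$ of a $C_b^1$ function of finitely many $\langle x, x_j^*\rangle$), and the chain rule for cylindrical functions gives $D_H(\psi_\eps \circ u_k) = \psi_\eps'(u_k) D_H u_k$. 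Since $|\psi_\eps'| \le 1$, these gradients are bounded in $\elle^2(X,\nu_\infty;H)$ uniformly in $\eps$ and $k$; letting $\eps \to 0$ (dominated convergence, using that $\psi_\eps'(u_k) D_H u_k \to \sign(u_k) D_H u_k$ pointwise and is dominated by $|D_H u_k|_H$) and invoking closability of $D_H$, we get $|u_k| \in W^{1,2}_H(X,\nu_\infty)$ with $D_H|u_k| = \sign(u_k) D_H u_k$. Then letting $k \to \infty$: $|u_k| \to |u|$ in $\elle^2$ since $\big||u_k| - |u|\big| \le |u_k - u|$, and $\sign(u_k) D_H u_k \to \sign(u) D_H u$ in $\elle^2(X,\nu_\infty;H)$ — here one splits as $\sign(u_k)(D_H u_k - D_H u) + (\sign(u_k) - \sign(u)) D_H u$; the first term goes to zero in norm, and for the second, $\sign(u_k) \to \sign(u)$ $\nu_\infty$-a.e. on $\{u \ne 0\}$ while on $\{u = 0\}$ we need the auxiliary fact $D_H u = 0$ $\nu_\infty$-a.e. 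Closability again yields $|u| \in W^{1,2}_H(X,\nu_\infty)$ and $D_H|u| = \sign(u) D_H u$. Finally $u^+ = \tfrac12(|u| + u)$ and $u^- = \tfrac12(|u| - u)$ give membership and the formulas $D_H(u^+) = \tfrac12(\sign(u)+1)D_H u = \mathds 1_{\{u>0\}} D_H u$ up to the null set $\{u=0\}$, and likewise for $u^-$.

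The one genuinely non-routine point — and the main obstacle — is the vanishing of $D_H u$ on $u^{-1}(0)$ $\nu_\infty$-a.e., which is needed to make the limit passage above legitimate and is also asserted in the statement. The clean way is: apply the already-proved identity $D_H|u| = \sign(u) D_H u$ and also, running the same argument with $\psi_\eps$ replaced by an approximation of $t \mapsto t^+$ adapted so that its derivative at $0$ can be taken to be any value in $[0,1]$, observe that the resulting gradient must be independent of that choice; the difference of two such gradients is $c\,\mathds 1_{\{u=0\}} D_H u$ for arbitrary $c$, forcing $\mathds 1_{\{u=0\}} D_H u = 0$ $\nu_\infty$-a.e. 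Alternatively, since $\nu_\infty = e^{-U} \mu_\infty$ with $e^{-U} > 0$ $\mu_\infty$-a.e., the statement for $\nu_\infty$ is equivalent to the corresponding $\mu_\infty$-statement, and one may simply cite \cite[Lemma 2.7]{DPL14} (or the classical result in \cite[Chapter 5]{Bog98}) to conclude that $D_H u$ vanishes $\mu_\infty$-a.e. on $u^{-1}(0)$, hence $\nu_\infty$-a.e.; I would present the self-contained argument and remark on this equivalence. The rest is bookkeeping: uniform convexity of $\elle^2(X,\nu_\infty;H)$ is not even needed here since all convergences are strong.
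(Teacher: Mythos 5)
Your overall strategy --- approximate $|t|$ by $\psi_\eps(t)=\sqrt{t^2+\eps^2}-\eps$, use the chain rule on cylindrical functions, and invoke the closability of $D_H$ in $\elle^2(X,\nu_\infty)$ --- is the standard one and is essentially what the paper intends (its proof consists of declaring the argument ``analogous to \cite[Lemma 2.7]{DPL14}''). However, as written your argument is circular at exactly the point you identify as the main obstacle. You take the limits in the order ``first $\eps\to0$ at fixed cylindrical $u_k$, then $k\to\infty$'', and the second passage requires $\mathds 1_{\{u=0\}}D_Hu=0$; you then propose to derive that vanishing from ``the already-proved identity $D_H|u|=\sign(u)D_Hu$'', whose proof was precisely the limit passage that needed the vanishing. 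The repair is to interchange the limits: for a fixed $C^1$ Lipschitz $\theta$, the passage $k\to+\infty$ in $D_H(\theta\circ u_k)=\theta'(u_k)D_Hu_k$ causes no trouble on $\{u=0\}$ because $\theta'$ is \emph{continuous} (your own splitting $\theta'(u_k)(D_Hu_k-D_Hu)+(\theta'(u_k)-\theta'(u))D_Hu$ converges to zero with no exceptional set). This gives the chain rule $D_H(\theta\circ u)=\theta'(u)D_Hu$ for every $u\in W^{1,2}_H(X,\nu_\infty)$; only then send $\theta=\psi_\eps\to|\cdot|$, respectively $\theta=\beta_\eps^{(c)}\to(\cdot)^+$ with $(\beta_\eps^{(c)})'(0)=c$. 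At that stage your ``arbitrary derivative at $0$'' trick is legitimate: comparing $c=0$ and $c=1$ forces $\mathds 1_{\{u=0\}}D_Hu=0$, and all the stated formulas follow.

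Two further remarks. First, the reduction in your alternative route is not valid as stated: $u\in W^{1,2}_H(X,\nu_\infty)$ does not imply $u\in W^{1,2}_H(X,\mu_\infty)$, since the two norms are inequivalent ($e^{-U}$ is in general neither bounded away from $0$ nor from $+\infty$), so \cite[Chapter 5]{Bog98} cannot be applied to $u$; equivalence of the measures transfers null sets, not Sobolev membership. Citing \cite[Lemma 2.7]{DPL14} directly is fine because that lemma is itself formulated for weighted measures $e^{-U}\mu$ --- and this is exactly what the paper does. Second, the paper's sketched mechanism for the vanishing statement differs from yours: it tests $[D_Hu,v_i]_H$ against $\varphi\in\fcon_b^{1}(X)$, where $\{v_i\}$ is the orthonormal basis of $H$ contained in $D(V^*)$, and uses the integration-by-parts formula \eqref{int_parti_peso} to show that $\int_{\{u=0\}}[D_Hu,v_i]_H\varphi\, d\nu_\infty=0$, concluding by density. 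Your truncation trick achieves the same end and is arguably more elementary, but only after the reordering above.
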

\begin{proof}
The proof is analogous to that of \cite[Lemma 2.7]{DPL14} and we omit it. We simply remark that, to prove that second part, as in the proof of Proposition \ref{clos_D_H_peso} we consider the basis $\{v_n:n\in\N\}$ of $H$ of elements of $D(V^*)$ and we show that
\begin{align*}
\int_{\{u=0\}}[D_Hu,v_i]_H\varphi d\nu_\infty=0,
\end{align*}
for any $u\in W^{1,2}_H(X,\nu_\infty)$ and any $\varphi\in \fcon_b^{1}(X)$.
\end{proof}

Thanks to Lemma \ref{diff_modulo_parte_pos_neg} we can prove that both $L_2$ and $L_2^*$ are Dirichlet operators and therefore that both $(T_2(t))_{t\geq0}$ and $(T_2^*(t))_{t\geq0}$ are sub-Markovian operators on $\elle^2(X,\nu_\infty)$. For reader's convenience, we recall the definitions of  Dirichlet and sub-Markovian operators and their main properties (see e.g. \cite[Chapter 1, Definition 4.1 \& Proposition 4.3]{MR92}).
\begin{defn}
Let $(E,B,\mu)$ be a measure space and let $\mathscr H:=\elle^2(E,\mu)$ be a Hilbert space.
\begin{itemize}
\item[(i)] A semigroup $(S(t))_{t\geq0}$ on $\mathscr H$ is called sub-Markovian if for any $t\geq0$ and any $f\in \mathscr H$ with $0\leq f\leq 1$ $\mu$-a.e., we have $0\leq S(t)f\leq 1$ $\mu$-a.e.
\item[(ii)] A closed linear densely defined operator $A$ on $\mathscr H$ is called Dirichlet operator on $\mathscr H$ if
\begin{align*}
\int_E Au(u-1)^+d\mu\leq 0, \quad u\in D(A).
\end{align*}
\end{itemize}
\end{defn}

\begin{pro}
Let $(S(t))_{t\geq0}$ be a strongly continuous contraction semigroup on $\elle^2(E,\mu)$ with generator $\mathcal A$. Then, the following are equivalent:
\begin{itemize}
\item[(i)] $(S(t))_{t\geq0}$ is a sub-Markovian semigroup on $\elle^2(E,\mu)$.
\item[(ii)] $\mathcal A$ is a Dirichlet operator on $\elle^2(E,\mu)$.
\end{itemize}
\end{pro}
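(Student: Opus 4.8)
The plan is to prove the two implications separately; both reduce to elementary computations with the generator $\mathcal A$, its resolvent, and the positivity of $(S(t))_{t\ge0}$.

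\emph{$(i)\Rightarrow(ii)$.} Assume $(S(t))_{t\ge0}$ is sub-Markovian. First I would record the two standard consequences that $(S(t))$ is positivity preserving --- approximate $f\ge0$ by $\frac1n(f\wedge n)$, which takes values in $[0,1]$, and pass to the limit in $\elle^2(E,\mu)$ --- and that $S(t)1\le1$ (apply the definition to the constant function $1$, which lies in $\elle^2(E,\mu)$ since $\mu$ is finite, as in all our applications). Fix $u\in D(\mathcal A)$ and recall that $\mathcal Au=\lim_{t\downarrow0}\frac1t(S(t)u-u)$ in $\elle^2(E,\mu)$, so that $\int_E\mathcal Au\,(u-1)^+\,d\mu=\lim_{t\downarrow0}\frac1t\int_E(S(t)u-u)(u-1)^+\,d\mu$; hence it suffices to prove $\int_E(S(t)u-u)(u-1)^+\,d\mu\le0$ for every $t>0$. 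I would split $u=(u\wedge1)+(u-1)^+$. Since $u\wedge1\le1$ and $S(t)$ is positivity preserving, $S(t)(u\wedge1)\le S(t)1\le1$, whence $\int_ES(t)(u\wedge1)(u-1)^+\,d\mu\le\int_E(u-1)^+\,d\mu=\int_E(u\wedge1)(u-1)^+\,d\mu$ (because $(u\wedge1)(u-1)^+=(u-1)^+$ pointwise). For the remaining piece, set $g:=(u-1)^+\ge0$ and use the Cauchy--Schwarz inequality together with $\elle^2$-contractivity: $\int_ES(t)g\,g\,d\mu\le\|S(t)g\|_{\elle^2(E,\mu)}\|g\|_{\elle^2(E,\mu)}\le\|g\|_{\elle^2(E,\mu)}^2=\int_E((u-1)^+)^2\,d\mu$. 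Adding the two estimates gives $\int_ES(t)u\,(u-1)^+\,d\mu\le\int_E(u-1)^+\,d\mu+\int_E((u-1)^+)^2\,d\mu=\int_Eu\,(u-1)^+\,d\mu$, which is exactly what we need.

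\emph{$(ii)\Rightarrow(i)$.} Assume $\mathcal A$ is a Dirichlet operator. Since $\mathcal A$ generates the $C_0$-contraction semigroup $(S(t))$, for every $\alpha>0$ the resolvent $G_\alpha:=(\alpha-\mathcal A)^{-1}$ is bounded on $\elle^2(E,\mu)$, and the exponential formula $S(t)=\lim_{n\to\infty}(I-\tfrac tn\mathcal A)^{-n}=\lim_{n\to\infty}\big(\tfrac nt G_{n/t}\big)^{n}$ holds strongly; since $\{\,f\in\elle^2(E,\mu):0\le f\le1\,\}$ is closed and convex, it suffices to show that $\alpha G_\alpha$ maps it into itself for every $\alpha>0$. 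Fix $f$ with $0\le f\le1$ and put $u:=\alpha G_\alpha f\in D(\mathcal A)$, so that $\mathcal Au=\alpha(u-f)$. The upper bound follows at once from the Dirichlet inequality applied to $u$: $0\ge\int_E\mathcal Au\,(u-1)^+\,d\mu=\alpha\int_E(u-1)(u-1)^+\,d\mu+\alpha\int_E(1-f)(u-1)^+\,d\mu\ge\alpha\|(u-1)^+\|_{\elle^2(E,\mu)}^2$, hence $(u-1)^+=0$ $\mu$-a.e., i.e. $u\le1$. For the lower bound I would first upgrade the hypothesis: applying $\int_E\mathcal Aw\,(w-1)^+\,d\mu\le0$ to $w=cv$ with $v\in D(\mathcal A)$ and $c>0$, dividing by $c$, and letting $c\to+\infty$ (dominated convergence, since $(v-\tfrac1c)^+\uparrow v^+$ with $0\le(v-\tfrac1c)^+\le|v|$) yields $\int_E\mathcal Av\,v^+\,d\mu\le0$ for all $v\in D(\mathcal A)$. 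Applying this with $v=-u$ and using $(-u)^+=u^-$ gives $\int_E\mathcal Au\,u^-\,d\mu\ge0$; on the other hand $\int_E\mathcal Au\,u^-\,d\mu=\alpha\int_E(u-f)u^-\,d\mu=-\alpha\|u^-\|_{\elle^2(E,\mu)}^2-\alpha\int_Efu^-\,d\mu\le0$ because $f,u^-\ge0$. Therefore $u^-=0$ $\mu$-a.e., i.e. $u\ge0$, so $\alpha G_\alpha$ preserves $\{\,f:0\le f\le1\,\}$ and $(S(t))$ is sub-Markovian.

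The only genuine subtlety I anticipate is the interplay between the semigroup and its generator. In $(i)\Rightarrow(ii)$ one must make sure all functions involved actually belong to $\elle^2(E,\mu)$ (the constant $1$ and $u\wedge1$ use the finiteness of $\mu$, or a truncation argument in the $\sigma$-finite case; here $\mu=\nu_\infty$ is finite, so this is harmless). In $(ii)\Rightarrow(i)$ the point that is not immediately obvious is that the single inequality $\int_E\mathcal Au\,(u-1)^+\,d\mu\le0$ already encodes the lower bound $0\le S(t)f$; this is extracted through the scaling limit $c\to+\infty$, which converts it into $\int_E\mathcal Av\,v^+\,d\mu\le0$. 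Everything else reduces to the Cauchy--Schwarz inequality, $\elle^2$-contractivity, the exponential formula for $(S(t))$, and the elementary pointwise identities for $(u-1)^+$ and $u^\pm$.
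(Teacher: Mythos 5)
Your proof is correct. Note that the paper itself does not prove this proposition: it is quoted verbatim from Ma--R\"ockner \cite[Chapter 1, Proposition 4.3]{MR92}, and your argument --- difference quotients plus the splitting $u=(u\wedge 1)+(u-1)^+$ for $(i)\Rightarrow(ii)$, and the resolvent $\alpha G_\alpha$ together with the exponential formula for $(ii)\Rightarrow(i)$ --- is essentially the standard one from that reference. The only point worth keeping explicit is the one you already flag: the step $S(t)(u\wedge 1)\leq S(t)1\leq 1$ uses $1\in\elle^2(E,\mu)$, which holds here because $\mu=\nu_\infty$ is finite (otherwise one replaces $1$ by an increasing sequence of $[0,1]$-valued $\elle^2$ truncations).
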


We prove that it is possible to extend the semigroup $(T_2(t))_{t\geq0}$ to a strongly continuous contraction semigroup on $\elle^p(X,\nu_\infty)$ for any $p\in[1,+\infty)$. We follow the proof of \cite[Theorem 1.4.1]{DAV89}.
\begin{pro}
\label{generazione_smgr_Lp}
The semigroup $(T_2(t))_{t\geq0}$ can be uniquely extended to a positive contraction semigroup $(T_p(t))_{t\geq0}$ on $\elle^p(X,\nu_\infty)$ for any $p\in[1,+\infty)$. These semigroups are strongly continuous and are consistent in the sense that if $q\geq p$ then $T_p(t)f=T_q(t)f$ for any $f\in \elle^q(X,\nu_\infty)$.
\end{pro}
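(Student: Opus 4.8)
The plan is to follow the classical scheme of \cite[Theorem 1.4.1]{DAV89}, exploiting crucially that $\nu_\infty$ is a \emph{finite} measure --- so that $\elle^\infty(X,\nu_\infty)\hookrightarrow\elle^q(X,\nu_\infty)\hookrightarrow\elle^p(X,\nu_\infty)$ whenever $+\infty\geq q\geq p\geq1$ --- and that \emph{both} $(T_2(t))_{t\geq0}$ and $(T_2^*(t))_{t\geq0}$ are sub-Markovian, which has just been established.

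First I would prove that $T_2(t)$ is a contraction on $\elle^\infty(X,\nu_\infty)$. Since $\mathds 1\in\elle^2(X,\nu_\infty)$ and $0\leq\mathds 1\leq1$, sub-Markovianity gives $0\leq T_2(t)\mathds 1\leq\mathds 1$; then for real $f\in\elle^\infty(X,\nu_\infty)$ with $\|f\|_{\elle^\infty(X,\nu_\infty)}\leq c$ the functions $c\mathds 1\pm f$ are nonnegative, hence $T_2(t)(c\mathds 1\pm f)\geq0$ by positivity, i.e. $|T_2(t)f|\leq c\,T_2(t)\mathds 1\leq c$ $\nu_\infty$-a.e. Applying the same argument to $(T_2^*(t))_{t\geq0}$ yields $\|T_2^*(t)g\|_{\elle^\infty(X,\nu_\infty)}\leq\|g\|_{\elle^\infty(X,\nu_\infty)}$, and then by duality, for $f\in\elle^\infty(X,\nu_\infty)\subset\elle^2(X,\nu_\infty)$,
\begin{align*}
\|T_2(t)f\|_{\elle^1(X,\nu_\infty)}
&=\sup_{\|g\|_{\elle^\infty(X,\nu_\infty)}\leq1}\left|\int_XT_2(t)f\,g\,d\nu_\infty\right|
=\sup_{\|g\|_{\elle^\infty(X,\nu_\infty)}\leq1}\left|\int_Xf\,T_2^*(t)g\,d\nu_\infty\right| \\
&\leq\|f\|_{\elle^1(X,\nu_\infty)}.
\end{align*}
By the Riesz--Thorin interpolation theorem it follows that $\|T_2(t)f\|_{\elle^p(X,\nu_\infty)}\leq\|f\|_{\elle^p(X,\nu_\infty)}$ for every $p\in[1,+\infty)$ and every $f\in\elle^\infty(X,\nu_\infty)$. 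Since $\elle^\infty(X,\nu_\infty)$ is dense in $\elle^p(X,\nu_\infty)$, each $T_2(t)$ extends uniquely to a contraction $T_p(t)$ on $\elle^p(X,\nu_\infty)$, and the semigroup law and positivity pass to the limit (for positivity, approximate $0\leq f\in\elle^p(X,\nu_\infty)$ by $\min(f,n)\in\elle^\infty(X,\nu_\infty)$). Consistency is then immediate: for $q\geq p$ and $f\in\elle^q(X,\nu_\infty)$ one approximates $f$ by a sequence in $\elle^\infty(X,\nu_\infty)$ simultaneously in $\elle^q(X,\nu_\infty)$ and in $\elle^p(X,\nu_\infty)$ --- possible because $\|\cdot\|_{\elle^p(X,\nu_\infty)}\leq\nu_\infty(X)^{1/p-1/q}\|\cdot\|_{\elle^q(X,\nu_\infty)}$ --- on which $T_p(t)$ and $T_q(t)$ both coincide with $T_2(t)$.

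For strong continuity, fix $p\in[1,+\infty)$ and $f\in\elle^\infty(X,\nu_\infty)$. Strong continuity of $(T_2(t))_{t\geq0}$ on $\elle^2(X,\nu_\infty)$ gives $T_2(t)f\to f$ in $\elle^2(X,\nu_\infty)$ as $t\to0^+$; combined with the uniform bound $\|T_2(t)f\|_{\elle^\infty(X,\nu_\infty)}\leq\|f\|_{\elle^\infty(X,\nu_\infty)}$ this forces $T_p(t)f\to f$ in $\elle^p(X,\nu_\infty)$, by H\"older's inequality when $p\leq2$ and via $\|T_p(t)f-f\|_{\elle^p(X,\nu_\infty)}^p\leq(2\|f\|_{\elle^\infty(X,\nu_\infty)})^{p-2}\|T_2(t)f-f\|_{\elle^2(X,\nu_\infty)}^2$ when $p\geq2$. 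The uniform contractivity $\|T_p(t)\|\leq1$ together with the density of $\elle^\infty(X,\nu_\infty)$ in $\elle^p(X,\nu_\infty)$ then upgrades this to strong continuity at $t=0$ on all of $\elle^p(X,\nu_\infty)$, and the semigroup property propagates it to $[0,+\infty)$. The argument is essentially routine; the only step demanding genuine care --- and precisely the reason sub-Markovianity of the \emph{adjoint} semigroup was established beforehand --- is the $\elle^1$-contractivity of $T_2(t)$, which, lacking self-adjointness, cannot be read off directly and must be extracted by duality from the $\elle^\infty$-contractivity of $T_2^*(t)$.
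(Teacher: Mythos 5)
Your proposal is correct and follows essentially the same scheme as the paper, which likewise adapts \cite[Theorem 1.4.1]{DAV89}: sub-Markovianity of $(T_2(t))_{t\geq0}$ and $(T_2^*(t))_{t\geq0}$, hence $\elle^\infty$-contractivity of both, hence $\elle^1$-contractivity of $T_2(t)$ by duality against $T_2^*(t)$, then Riesz--Thorin and a density argument. Two small differences are worth recording. First, the paper does not take the sub-Markovianity as given: its Step~1 verifies that $L_2$ and $L_2^*$ are Dirichlet operators, computing $\int_X L_2u\,(u-1)^+\,d\nu_\infty=\int_{\{u>1\}}[BD_Hu,D_Hu]_H\,d\nu_\infty\leq0$ via Lemma \ref{diff_modulo_parte_pos_neg} and Lemma \ref{propr_B}; you should include this half-line, since it is the only place where the specific structure of the form enters. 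Second, for strong continuity the paper argues on $\elle^1$ using $C_b(X)$ and interpolates to get $p\in(1,2)$, but for $p>2$ it invokes reflexivity of $\elle^p(X,\nu_\infty)$ together with \cite[Theorem 1.34]{DAV80} (weak continuity of a contraction semigroup implies strong continuity), whereas you give a direct quantitative bound $\|T_p(t)f-f\|_{\elle^p(X,\nu_\infty)}^p\leq(2\|f\|_{\elle^\infty(X,\nu_\infty)})^{p-2}\|T_2(t)f-f\|_{\elle^2(X,\nu_\infty)}^2$ on the dense set $\elle^\infty(X,\nu_\infty)$; your variant is more elementary and equally valid.
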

\begin{proof}
For reader's convenience, we split the proof into different steps.

\vspace{2mm}
{\bf Step $1$}. We prove that both $L_2$ and $L_2^*$ are Dirichlet operators on $\elle^2(X,\nu_\infty)$. Let $u\in D(L_2)$. Then, $u\in W^{1,2}_H(X,\nu_\infty)$ and from Lemma \ref{diff_modulo_parte_pos_neg} we infer that $(u-1)^+\in W^{1,2}_H(X,\nu_\infty)$ and $D_H(u-1)^+=\mathds 1_{u\geq 1}D_Hu$. Therefore,
\begin{align*}
\int_XL_2u(u-1)^+d\nu_\infty
= & \int_X[BD_Hu,D_H(u-1)^+]_Hd\nu_\infty
=\int_{\{u>1\}}[BD_Hu,D_Hu]_Hd\nu_\infty\leq 0,
\end{align*}
thanks to Lemma \ref{propr_B}. The computations for $L_2^*$ are analogous. Hence, both $L_2$ and $L_2^*$ are Dirichlet operators on $\elle^2(X,\nu_\infty)$, which means that $(T_2(t))_{t\geq0}$ and $(T_2^*(t))_{t\geq0}$ are sub-Markovian semigroups on $\elle^2(X,\nu_\infty)$.

\vspace{2mm}
{\bf Step $2$}. We claim that $\elle^1(X,\nu_\infty)$ and  $\elle^\infty(X,\nu_\infty)$ are invariant for $T_2(t)$, for any $t\geq0$. From Step $1$ we know that for any $f\in \elle^2(X,\nu_\infty)$ such that $0\leq f\leq 1$ $\nu_\infty$-a.e.we have $0\leq T_2(t)f\leq 1$ $\nu_\infty$-a.e. Then, it follows that $\elle^\infty(X,\nu_\infty)$ is invariant under $(T_2(t))_{t\geq0}$. Obviously, the same holds true for $(T_2^*(t))_{t\geq0}$. Let $f\in \elle^2(X,\nu_\infty)$. For any $g\in \elle^\infty(X,\nu_\infty)$, we have
\begin{align}
\label{stima_L1_smgr}
\left|\int_XT_2(t)f gd\nu_\infty\right|
= & \left|\int_XfT_2^*(t)gd\nu_\infty\right|
\leq \|f\|_{\elle^1(X,\nu_\infty)}\|g\|_{\elle^\infty(X,\nu_\infty)}, \quad t\geq0,
\end{align}
since also $T_2^*(t)$ is a contraction on ${\elle^\infty(X,\nu_\infty)}$. \eqref{stima_L1_smgr} and the density of $\elle^2(X,\nu_\infty)$ in $\elle^1(X,\nu_\infty)$ implies that for any $f\in\elle^1(X,\nu_\infty)$ we have $T_2(t)f\in\elle^1(X,\nu_\infty)$ for any $t\geq0$ and
\begin{align*}
\|T_2(t)f\|_{\elle^1(X,\nu_\infty)}\leq \|f\|_{\elle^1(X,\nu_\infty)}, \quad t\geq0.
\end{align*}
The claim is so proved. By applying the Riesz-Thorin Interpolation Theorem \cite[Section 1.18.7, Theorem 1]{TR78} we conclude that $(T_2(t))_{t\geq0}$ extends to a positive contraction semigroup $(T_p(t))_{t\geq0}$ on $\elle ^p(X,\nu_\infty)$ for any $p\in[1,+\infty)$. Uniqueness follows by density.

\vspace{2mm}
{\bf Step $3$}. Now we show that $(T_p(t))_{t\geq0}$ is strongly continuous if $p\in[1,+\infty)$. Let $f\in C_b(X)$. We have
\begin{align*}
\lim_{t\rightarrow0}\|T_1(t)f-f\|_{\elle^1(X,\nu_\infty)}
= & \lim_{t\rightarrow0}\int_X|T_1(t)f-f|d\nu_\infty
\leq \lim_{t\rightarrow0}\nu_\infty(X)^{1/2}\|T_2(t)f-f\|_{\elle^2(X,\nu_\infty)}=0.
\end{align*}
The density of continuous bounded functions in $\elle^1(X,\nu_\infty)$ implies that $(T_1(t))_{t\geq0}$ is strongly continuous on $\elle^1(X,\nu_\infty)$. By interpolation, we infer the strong continuity of $(T_p(t))_{t\geq0}$ on $\elle^p(X,\nu_\infty)$ for any $p\in(1,2)$. Finally, the reflexivity of $\elle^p(X,\nu_\infty)$ (see e.g. \cite[Section 4, Theorem 1]{DU77}) for any $p\in(1,+\infty)$ and \cite[Theorem 1.34]{DAV80} allow us to conclude that $(T_p(t))_{t\geq0}$ is strongly continuous on $\elle^p(X,\nu_\infty)$ for any $p\in(2,+\infty)$. 
\end{proof}

For any $p\in[1,+\infty)$ let us denote by $L_p$ the infinitesimal generator of $(T_p(t))_{t\geq0}$. Since $(T_p(t))_{t\geq0}$ is a positive strongly continuous semigroup for any $p\in[1,+\infty)$, we get $1\in\rho(L_p)$ for any $p\in[1,+\infty)$. The following result holds true.

\begin{pro}
\label{inc_dom_oper_Lp}
For any $p,q\in(1,+\infty)$ with $q>p$, we have $D(L_q)\subset D(L_p)$ with continuous embedding and for any $u\in D(L_q)$ we have that $L_qu=L_pu$. In particular, $D(L_p)\subset W^{1,2}_H(X,\nu_\infty)$ with continuous embedding for any $p\geq 2$.
\end{pro}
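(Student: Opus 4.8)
The plan is to derive everything from the consistency of the family $(T_p(t))_{t\geq0}$ proved in Proposition \ref{generazione_smgr_Lp}, together with the elementary fact that $\nu_\infty$ is a \emph{finite} measure: by Hölder's inequality, for $q>p$ one has the continuous inclusion $\elle^q(X,\nu_\infty)\hookrightarrow\elle^p(X,\nu_\infty)$ with $\|f\|_{\elle^p(X,\nu_\infty)}\leq\nu_\infty(X)^{\frac1p-\frac1q}\|f\|_{\elle^q(X,\nu_\infty)}$.

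First I would fix $q>p$ and take $u\in D(L_q)$. By definition of the generator, $t^{-1}(T_q(t)u-u)\to L_qu$ in $\elle^q(X,\nu_\infty)$ as $t\downarrow0$. Since $u\in\elle^q(X,\nu_\infty)\subset\elle^p(X,\nu_\infty)$, the consistency statement of Proposition \ref{generazione_smgr_Lp} gives $T_q(t)u=T_p(t)u$ for every $t\geq0$, so $t^{-1}(T_p(t)u-u)=t^{-1}(T_q(t)u-u)$ converges in $\elle^q(X,\nu_\infty)$, hence in $\elle^p(X,\nu_\infty)$, to $L_qu$. Therefore $u\in D(L_p)$ and $L_pu=L_qu$, and the same inclusion yields
\begin{align*}
\|u\|_{\elle^p(X,\nu_\infty)}+\|L_pu\|_{\elle^p(X,\nu_\infty)}\leq\nu_\infty(X)^{\frac1p-\frac1q}\big(\|u\|_{\elle^q(X,\nu_\infty)}+\|L_qu\|_{\elle^q(X,\nu_\infty)}\big),
\end{align*}
i.e. $D(L_q)\hookrightarrow D(L_p)$ continuously.

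For the last assertion I would treat $p=2$ first. By the very definition \eqref{cerniera} of $L_2$ one already has $D(L_2)\subset W^{1,2}_H(X,\nu_\infty)$, so only the full norm has to be controlled. Since $(\mathcal E,\mathcal D)$ is a coercive closed form, $\mathcal E$ is continuous on $W^{1,2}_H(X,\nu_\infty)\times W^{1,2}_H(X,\nu_\infty)$, and $\fcon_{b}^{1}(X)$ is dense in $\mathcal D=W^{1,2}_H(X,\nu_\infty)$ by Proposition \ref{clos_D_H_peso}; hence for $u\in D(L_2)$ the identity $\mathcal E(u,v)=-\int_X(L_2u)v\,d\nu_\infty$ extends from $v\in\fcon_{b}^{1}(X)$ to all $v\in W^{1,2}_H(X,\nu_\infty)$. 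Taking $v=u$ and using \eqref{topolino},
\begin{align*}
\tfrac12\|D_Hu\|_{\elle^2(X,\nu_\infty;H)}^2=\mathcal E(u,u)=-\int_X(L_2u)\,u\,d\nu_\infty\leq\tfrac12\big(\|L_2u\|_{\elle^2(X,\nu_\infty)}^2+\|u\|_{\elle^2(X,\nu_\infty)}^2\big),
\end{align*}
so $\|u\|_{1,2,H}\leq2\|u\|_{\elle^2(X,\nu_\infty)}+\|L_2u\|_{\elle^2(X,\nu_\infty)}$, which is the continuous embedding $D(L_2)\hookrightarrow W^{1,2}_H(X,\nu_\infty)$. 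For $p>2$ I would then apply the first part with the pair $2<p$ to obtain $D(L_p)\hookrightarrow D(L_2)$ continuously, and compose it with the embedding just proved.

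I do not expect a genuine obstacle: the argument is soft, relying only on the consistency of the semigroups $(T_p(t))$ and on the finiteness of $\nu_\infty$, plus the coercivity of $\mathcal E$ for the $W^{1,2}_H$-bound. The only point deserving a little care is the passage of the variational identity \eqref{cerniera} from cylindrical test functions to the diagonal $v=u$, which is where the continuity of $\mathcal E$ on $W^{1,2}_H(X,\nu_\infty)$ and the density of $\fcon_{b}^{1}(X)$ in $W^{1,2}_H(X,\nu_\infty)$ are used.
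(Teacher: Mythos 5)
Your proof is correct and follows essentially the same route as the paper: Hölder's inequality on the finite measure $\nu_\infty$ applied to the difference quotients, together with the consistency $T_p(t)u=T_q(t)u$, gives $D(L_q)\hookrightarrow D(L_p)$ and $L_pu=L_qu$, and the last assertion is reduced to $D(L_2)\hookrightarrow W^{1,2}_H(X,\nu_\infty)$. The only difference is that you supply the coercivity estimate $\tfrac12\|D_Hu\|^2_{\elle^2(X,\nu_\infty;H)}=\mathcal E(u,u)=-\int_X(L_2u)u\,d\nu_\infty$ to justify the continuity of that last embedding, which the paper simply asserts as known from the Dirichlet-form construction.
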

\begin{proof}
Let $u\in D(L_q)$. Then, we have
\begin{align*}
\|t^{-1}(T_p(t)u-u)-L_qu\|^p_{\elle^p(X,\nu_\infty)}
=&  \int_X\left|\frac{T_q(t)u-u}{t}-L_qu\right|^pd\nu_\infty \\
\leq & (\nu_\infty(X))^{1/r'}\|t^{-1}(T_q(t)u-u)-L_qu\|_{\elle^q(X,\nu_\infty)}^{1/r}\rightarrow0,
\end{align*}
as $t\rightarrow0$, where $r=\frac qp$ and $r'=\frac{q}{q-p}$ . Hence, $u\in D(L_p)$ and $L_pu=L_qu$.

The last part follows from the fact that $D(L_2)\subset W^{1,2}_H(X,\nu_\infty)$ with continuous injection.
\end{proof}

\section{Analyticity of the semigroup associated to \texorpdfstring{$L_p$}{Lp}}
\label{analyticity}
In this section we show that $L_p$ is sectorial in $\elle^p(X,\nu_\infty)$ for any $p\in(1,+\infty)$, i.e., $(T_p(t))_{t\geq0}$ is an analytic semigroup on the sector ${\Sigma_{\theta_p}}:=\left\{re^{i\phi}:r>0, |\phi|< \theta_p\right\}$, where 
\begin{align}
\label{angolo_settore}
{\rm cotg}(\theta_p)=\frac{\sqrt{(p-2)^2+p^2\gamma^2}}{2\sqrt{p-1}}, \quad \gamma:=\|B-B^*\|_{\mathcal L(H)}.
\end{align}
To this aim we follow the approach of \cite[Section 3]{MV07}. We introduce the following spaces of functions.
\begin{defn}
For any $p\in(1,+\infty)$ we set $\elle^p_{\C}(X,\nu_\infty):=\elle^p(X,\nu_\infty)+i\elle^p(X,\nu_\infty)$ with dual product $(f,g):=\int_Xf\overline gd\nu_\infty$ for any $f\in \elle^p_{\C}(X,\nu_\infty)$ and $g\in \elle^{p'}_{\C}(X,\nu_\infty)$. For any $k\in\N\cup\{\infty\}$ we denote by $\fcon_{b}^{k,1}(X;\C)$ the functions $f=u+iv$ such that $u,v\in \fcon_{b}^{k,1}(X)$. We set $W^{1,p}_{H,\C}(X,\nu_\infty):=W^{1,p}_{H}(X,\nu_\infty)+iW^{1,p}_{H}(X,\nu_\infty)$ for any $p\in(1,+\infty)$.

We consider the operator $L_p^{\C}$, on $D(L_p^{\C}):=D(L_p)+iD(L_p)$ endowed with the complexified norm of $D(L_p)$, defined by $L_p^{\C}f:=L_pu+iL_pv$, where $f:=u+iv\in D(L_p^{\C})$.
\end{defn}

\begin{rmk}
\label{estensione_ris}
It is not hard to prove that all the results in Section \ref{ferragosto} and Section \ref{nonsymm_OU_op_section} can be extended by complexification to the complex case.
\end{rmk}

\begin{rmk}
\label{duality_set}
We recall the definition of {\it duality map}. Given a Banach space $Y$ and given a duality $( \cdot,\cdot)_{Y\times Y^*}$ between $Y$ and $Y^*$, the duality map $\partial (y)\subset Y^*$ of $y\in Y$ is given by $\partial(y):=\{y^*\in Y^*:(y,y^*)_{Y\times Y^*}=\|y\|^2_Y=\|y^*\|^2_{Y^*}\}$. For any $p\in(1,+\infty)$ and any $f\in \elle^p_{\C}(X,\nu_\infty)$, with respect to the duality $\langle f,g\rangle:=\int_Xfgd\nu_\infty$, we have $\partial (f)=\{\|f\|_{p}^{2-p}f^*\}$, with 
\begin{align*}
f^*(x):=
\begin{cases}
\overline f(x)|f(x)|^{p-2}, & f(x)\neq0, \\
0, & f(x)=0.
\end{cases}
\end{align*}
In particular, $f^*$ is well defined also for $p\in(1,2)$.
\end{rmk}

For any $\theta\in[0,\pi/2)$ we set $C_\theta:={\rm cotg}(\theta)$. We will apply the following proposition, which is an adaptation of \cite[Proposition 3.2]{MV07} to our situation.
\begin{pro}
\label{vasca}
Let $\mathscr A$ be a densely defined operator on $\elle^p(X,\nu_\infty)$ and assume that $1\in \rho(\mathscr A)$. Then, the following are equivalent:
\begin{itemize}
\item[(i)] $\mathscr A$ generates an analytic $C_0$-semigroup on $\elle^p(X,\nu_\infty)$ which is contractive on $\Sigma_\theta$;
\item[(ii)] for any $f\in D(\mathscr A)$ we have 
\begin{align}
\left|{\rm Im}\left(\int_X\mathscr A f f^* d\nu_\infty\right)\right|\leq -C_\theta{\rm Re}\left(\int_X\mathscr A f  f^* d\nu_\infty\right). \label{moscone}
\end{align}
\end{itemize}
\end{pro}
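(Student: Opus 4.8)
The plan is to establish the equivalence of (i) and (ii) via the numerical range theorem for analytic semigroups (see \cite{GR97}), exactly as in the finite-dimensional perturbed case treated in \cite{MV07}. Recall the general principle: a densely defined operator $\mathscr A$ with $1\in\rho(\mathscr A)$ generates a $C_0$-semigroup that is analytic and contractive on $\Sigma_\theta$ if and only if the numerical range
\begin{align*}
W(\mathscr A):=\left\{\frac{(\mathscr A f,\partial(f))}{\|f\|_p^2}:f\in D(\mathscr A),\ f\neq 0\right\}
\end{align*}
is contained in the closed sector $\{z\in\C:|{\rm Im}\,z|\leq -C_\theta{\rm Re}\,z\}=\{z:z=0\ \textrm{or}\ |\arg(-z)|\leq\pi/2-\theta\}$, together with the spectral condition $1\in\rho(\mathscr A)$ (which is assumed) that upgrades dissipativity in this sector to generation. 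So the statement is essentially a translation of that abstract result into the concrete duality pairing on $\elle^p_\C(X,\nu_\infty)$.

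First I would recall from Remark \ref{duality_set} that, with respect to the pairing $\langle f,g\rangle=\int_X fg\,d\nu_\infty$, the duality map of $f\in\elle^p_\C(X,\nu_\infty)$ is the singleton $\partial(f)=\{\|f\|_p^{2-p}f^*\}$ where $f^*=\overline f|f|^{p-2}$ (and $f^*=0$ on $\{f=0\}$). Hence for $f\in D(\mathscr A)$,
\begin{align*}
\frac{(\mathscr A f,\partial(f))}{\|f\|_p^2}=\|f\|_p^{-p}\int_X \mathscr A f\, f^*\,d\nu_\infty,
\end{align*}
so the membership $W(\mathscr A)\subset\{|{\rm Im}\,z|\leq -C_\theta{\rm Re}\,z\}$ is, after multiplying through by the positive scalar $\|f\|_p^p$, exactly inequality \eqref{moscone} holding for all $f\in D(\mathscr A)$. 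This reduces the proposition to: (a) $\mathscr A$ is dissipative (equivalently ${\rm Re}\int_X\mathscr A f\,f^*\,d\nu_\infty\leq 0$ for all $f\in D(\mathscr A)$, which is the $\theta=0$ case of \eqref{moscone} and is automatically implied by it since $C_\theta\geq 0$); (b) the sectoriality/angle bookkeeping; and (c) the passage from ``dissipative in a sector with $1\in\rho(\mathscr A)$'' to ``generates an analytic contraction semigroup on that sector.'' Step (c) is where one invokes the Lumer--Phillips-type argument underlying \cite[Proposition 3.2]{MV07}: the resolvent estimate $\|(\lambda-\mathscr A)^{-1}\|\leq (\dist(\lambda,\Sigma'))^{-1}$ for $\lambda$ outside the sector $\Sigma'=\{z:|\arg z|\leq\pi/2-\theta\}$ follows from \eqref{moscone} by the standard computation $\|(\lambda-\mathscr A)f\|_p\|f\|_p\geq|\langle(\lambda-\mathscr A)f,f^*\rangle|\geq\dist(\lambda,\Sigma')\|f\|_p^2$, and $1\in\rho(\mathscr A)$ guarantees that the range condition holds so that this estimate is not vacuous; then Hille--Yosida on rotated half-planes gives analyticity and contractivity on $\Sigma_\theta$.

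The main obstacle — and the only point that is not a verbatim transcription of \cite{MV07} — is checking that the abstract numerical-range machinery genuinely applies in the present nonsymmetric, weighted, infinite-dimensional setting: namely that $\mathscr A$ is densely defined (assumed), that $D(\mathscr A)\subset\elle^p_\C(X,\nu_\infty)$ with the duality pairing being well-behaved for all $p\in(1,+\infty)$ including $p\in(1,2)$ (handled by the observation in Remark \ref{duality_set} that $f^*$ is well defined there), and that the complexification introduces no subtlety (Remark \ref{estensione_ris}). I would also note that one direction is essentially trivial: if (i) holds, then applying $T_p(t)$ and differentiating at $t=0$ along $f\in D(\mathscr A)$, the contractivity of $e^{z\mathscr A}$ on rays $z=se^{i\phi}$, $|\phi|<\theta$, yields ${\rm Re}(e^{i\phi}\langle\mathscr A f,f^*\rangle)\leq 0$ for all such $\phi$, and letting $\phi\to\pm(\pi/2-\theta)^{\mp}$... wait, more carefully, taking $\phi$ close to $\theta$ and to $-\theta$ and combining gives \eqref{moscone}. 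For the converse, \eqref{moscone} plus $1\in\rho(\mathscr A)$ feeds into the resolvent estimate above. I would present both implications compactly, citing \cite[Proposition 3.2]{MV07} and \cite{GR97} for the parts that are purely abstract and do not depend on the specific form of $L_p$, and spelling out only the identification of $\partial(f)$ and the sector arithmetic, which is the content specific to our $\elle^p(X,\nu_\infty)$ framework.
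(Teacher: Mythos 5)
Your proposal is correct and follows essentially the same route as the paper, which states Proposition \ref{vasca} without proof as an adaptation of \cite[Proposition 3.2]{MV07}, itself resting on the numerical range theorem of \cite{GR97}; your identification of the duality map via Remark \ref{duality_set}, the translation of \eqref{moscone} into the inclusion of the numerical range in the sector $\{z:|\arg(-z)|\leq\pi/2-\theta\}$, and the use of $1\in\rho(\mathscr A)$ to pass from the resolvent estimate to generation are exactly the intended ingredients. The only cosmetic point is the hesitation in the (i)$\Rightarrow$(ii) direction: the clean statement is that contractivity along the rays $e^{i\phi}\R_+$ gives ${\rm Re}\bigl(e^{i\phi}\langle\mathscr A f,\partial(f)\rangle\bigr)\leq 0$ for $|\phi|<\theta$, and letting $\phi\to\pm\theta$ yields \eqref{moscone}, which is the correction you arrive at.
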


\begin{rmk}
\label{diff_dual_funct}
Let $f\in\fcon_{b}^{1}(X;\C)$ and let $p\geq2$. Then, $f^*\in W^{1,2}_H(X,\nu_\infty)$ and we have
\begin{align*}
D_Hf^*
= & D_H(\overline f|f|^{p-2})
=|f|^{p-2}D_H\overline f+(p-2)|f|^{p-4}f \overline fD_Hf,
\end{align*}
where $f=u+iv$. In particular, $D_Hf^*$ is bounded. It is enough to consider the sequence $(f_n)\subset \fcon_b^1(X)$ given by $f_n:=\overline f(\theta_n\circ f)$, with $\theta_n(\xi)=\left(\xi^2+\frac1n\right)^{(p-2)/2}$ for any $\xi\in\R$ and $n\in\N$.
\end{rmk}

Finally, we recall \cite[Lemma 3.3]{MV07}, which is obtained by repeating the computations of \cite[Lemma 5]{CFMP05}.
\begin{lemma}
For any $f\in\fcon_{b}^{1}(X;\C)$ and any $p\in[2,+\infty)$ we have
\begin{align}
-{\rm Re}[BD_Hf,D_H{ f^*}]_H
= & -{\rm Re}[B^*D_Hf,D_H {f^*}]_H \notag \\
=&  \frac12|f|^{p-4}\left((p-1)|{\rm Re}(\overline f D_Hf)|_H^2+|{\rm Im}(\overline f D_Hf)|_H^2\right),
\label{parte_Re}
\end{align}
and
\begin{align}
\label{parte_Im1}
{\rm Im}[BD_Hf,D_H {f^*}]_H
& =p|f|^{p-4}\left[\left(B+\frac12I_H\right){\rm Im}(\overline f D_Hf),{\rm Re}(\overline f D_Hf)\right],\\
\label{parte_Im2}
{\rm Im}[B^*D_Hf,D_H {f^*}]_H
& =p|f|^{p-4}\left[\left(B^*+\frac12I_H\right){\rm Im}(\overline f D_Hf),{\rm Re}(\overline f D_Hf)\right].
\end{align}
\end{lemma}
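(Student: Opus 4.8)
The plan is to prove the three identities as pointwise ($\nu_\infty$-a.e.\ in $x$) algebraic identities in the complexification $H_\C:=H+iH$ of $H$, reducing everything to the structural relations for $B$ in Lemma \ref{propr_B}.

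First I would fix $x$, write $f=u+iv$ with $u,v\in\fcon_{b}^{1}(X)$, and set $\alpha:={\rm Re}(\overline f D_Hf)=uD_Hu+vD_Hv\in H$ and $\beta:={\rm Im}(\overline f D_Hf)=uD_Hv-vD_Hu\in H$. Differentiating $f^*=\overline f|f|^{p-2}$ as in Remark \ref{diff_dual_funct} — this is precisely where $p\geq2$ enters, to guarantee $f^*\in W^{1,2}_{H,\C}(X,\nu_\infty)$ and the boundedness of $D_Hf^*$ — one expresses $D_Hf^*$ through $\overline f$, $|f|$, $\alpha$ and $D_H\overline f$, while $D_Hf=D_Hu+iD_Hv$. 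I would then substitute both into $[BD_Hf,D_Hf^*]_H$, where $[\cdot,\cdot]_H$ is extended $\C$-bilinearly to $H_\C$ (the extension entering the complexified form \eqref{rough}, so that $[BD_Hg,D_Hh]_H$ integrates to $-\mathcal E(g,h)$) and $B$ acts complex-linearly, and expand. The scalar prefactors $f$, $\overline f$ and the powers of $|f|$ should cancel — using, for instance, $|f|^{p-2}/f=|f|^{p-4}\overline f$ — leaving $|f|^{p-4}$ times a finite combination of the terms $[B\alpha,\alpha]$, $[B\beta,\beta]$, $[B\alpha,\beta]$, $[B\beta,\alpha]$.

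For the real part, only the diagonal terms $[Bh,h]$ survive, and Lemma \ref{propr_B} turns each into $-\frac12|h|^2=[B^*h,h]$; this forces $B$ and $B^*$ to give the same real part, and completing squares in $\alpha,\beta$ yields ${\rm Re}[BD_Hf,D_Hf^*]_H=-\frac12|f|^{p-4}\big((p-1)|\alpha|^2+|\beta|^2\big)$, which is \eqref{parte_Re}. For the imaginary part one uses instead the antisymmetric part $B+\frac12I_H=\frac12(B-B^*)$: the mixed terms are rearranged via $[B\alpha,\beta]+[B\beta,\alpha]=[(B+B^*)\alpha,\beta]=-[\alpha,\beta]$ from Lemma \ref{propr_B}, and everything collapses to $p|f|^{p-4}\big[(B+\frac12I_H)\beta,\alpha\big]$, i.e.\ \eqref{parte_Im1}; \eqref{parte_Im2} comes out of the identical computation with $B$ replaced by $B^*$, whose symmetric part is unchanged and whose antisymmetric part has the opposite sign. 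On the $\nu_\infty$-null set $\{f=0\}$ all the quantities involved vanish (for $p>2$; for $p=2$ the factor $|f|^{p-4}$ is compensated, exactly as in \cite{CFMP05}), so the identities hold $\nu_\infty$-a.e.

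No new idea is needed beyond \cite[Lemma 5]{CFMP05} (equivalently \cite[Lemma 3.3]{MV07}): the only point of care — and the sole obstacle — is the bookkeeping of the complex arithmetic, that is, keeping the bilinear (not sesquilinear) extension of $[\cdot,\cdot]_H$ fixed throughout and tracking the $f$- and $\overline f$-powers so that the resulting expression is manifestly real and of the stated shape.
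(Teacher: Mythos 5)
Your overall strategy --- a pointwise expansion of $[BD_Hf,D_Hf^*]_H$ using the bilinear extension of $[\cdot,\cdot]_H$ together with $B+B^*=-I_H$ and $[Bh,h]_H=-\frac12|h|_H^2$ from Lemma \ref{propr_B} --- is the right one (the paper itself gives no proof, only the citation of \cite[Lemma 3.3]{MV07}), and your derivation of \eqref{parte_Re} is correct: with $\alpha:={\rm Re}(\overline fD_Hf)$, $\beta:={\rm Im}(\overline fD_Hf)$ and the identity $|\alpha|_H^2+|\beta|_H^2=|f|^2\left(|D_Hu|_H^2+|D_Hv|_H^2\right)$, only diagonal terms $[Bh,h]_H$ survive, so $B$ and $B^*$ give the same real part and the stated expression follows.

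The gap is in the imaginary part. Starting from $D_Hf^*=|f|^{p-2}D_H\overline f+(p-2)|f|^{p-4}\overline f\,\alpha$ and carrying out exactly the rearrangement you describe, one obtains
\begin{align*}
{\rm Im}[BD_Hf,D_Hf^*]_H
=|f|^{p-4}\Bigl((p-1)[B\beta,\alpha]_H-[B\alpha,\beta]_H\Bigr)
=p|f|^{p-4}\Bigl[\Bigl(B+\tfrac12I_H\Bigr)\beta,\alpha\Bigr]_H-\tfrac{p-2}{2}|f|^{p-4}[\beta,\alpha]_H,
\end{align*}
precisely because $[B\alpha,\beta]_H+[B\beta,\alpha]_H=-[\alpha,\beta]_H$ leaves the extra summand $-\frac{p-2}{2}|f|^{p-4}[\beta,\alpha]_H$, which you silently drop when asserting that ``everything collapses'' to $p|f|^{p-4}[(B+\frac12I_H)\beta,\alpha]_H$. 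This term does not vanish in general (take $D_Hu\perp D_Hv$ with $|D_Hu|_H\neq|D_Hv|_H$ and $uv\neq0$), so the collapse you claim does not occur for $p>2$. A decisive sanity check: for $B=-\frac12I_H$ (hence $\gamma=0$) your version of \eqref{parte_Im1} would give ${\rm Im}[BD_Hf,D_Hf^*]_H\equiv0$, i.e.\ a real numerical range and analyticity on a half-plane in every $\elle^p$, contradicting \eqref{angolo_settore}, which for $\gamma=0$ still gives ${\rm cotg}(\theta_p)=\frac{p-2}{2\sqrt{p-1}}>0$ when $p>2$. In fact the correct identity is ${\rm Im}[BD_Hf,D_Hf^*]_H=p|f|^{p-4}[(B+\frac1pI_H)\beta,\alpha]_H$ (and likewise with $B^*$ in \eqref{parte_Im2}); the displayed statement is a misprint of \cite[Lemma 3.3]{MV07}, as the $(p-2)^2$ under the square root in \eqref{angolo_settore} and the term $\left(\frac12-\frac1p\right)^2$ in \eqref{stima_gamma} confirm --- both arise exactly from the summand you discarded. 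So your argument cannot be repaired to yield \eqref{parte_Im1} verbatim; carried out honestly it proves the corrected identity, which is what the rest of Section \ref{analyticity} actually needs.
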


Following the arguments of \cite[Theorem 3.4]{MV07} we obtain the analyticity of the semigroup $(T_p(t))_{t\geq0}$ for any $p\in(1,+\infty)$.
\begin{pro}
\label{marola}
$(T_p(t))_{t\geq0}$ is analytic in $\elle^p(X,\nu_\infty)$ on the sector $\Sigma _{\theta_p}$.
\end{pro}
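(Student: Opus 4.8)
The plan is to verify the numerical range condition \eqref{moscone} of Proposition \ref{vasca} with $\mathscr A = L_p$ and $\theta = \theta_p$, treating the cases $p \geq 2$ and $p \in (1,2)$ separately, and then invoking the fact that $1 \in \rho(L_p)$ (Remark \ref{mirto} and the remarks following Proposition \ref{generazione_smgr_Lp}) together with $L_p$ being densely defined. For $p \geq 2$, the key point is that $\fcon_b^{2,1}(X;\C)$ is a core for $L_p$: indeed, by Proposition \ref{gruppo} we have $\fcon_b^{2,1}(X) \subset D(L_2)$ with the explicit divergence-form expression, and by Proposition \ref{inc_dom_oper_Lp} we have $D(L_q) \subset D(L_p)$ for $q > p$, so $D(L_2) \subset D(L_p)$ for $p \geq 2$; combined with the density of $\fcon_b^{2,1}(X)$ in $W^{1,2}_H(X,\nu_\infty)$ (Remark \ref{eq_chiusura_dom_C2}) and a standard graph-closure argument one checks that it suffices to prove \eqref{moscone} for $f \in \fcon_b^{2,1}(X;\C)$. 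For such $f$ we use the integration-by-parts identity underlying Proposition \ref{gruppo} to write
\begin{align*}
\int_X L_p f\, f^*\, d\nu_\infty = \int_X L_2 f\, f^*\, d\nu_\infty = \int_X [B D_H f, D_H f^*]_H\, d\nu_\infty,
\end{align*}
which is legitimate because $f^* \in W^{1,2}_H(X,\nu_\infty)$ with bounded $H$-gradient (Remark \ref{diff_dual_funct}).

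Once the problem is reduced to estimating $\int_X [B D_H f, D_H f^*]_H\, d\nu_\infty$, I would plug in the pointwise formulas \eqref{parte_Re} and \eqref{parte_Im1} from the last Lemma. Writing $a := \mathrm{Re}(\overline f D_H f)$ and $b := \mathrm{Im}(\overline f D_H f)$ pointwise in $H$, we get
\begin{align*}
-\mathrm{Re}\!\int_X [B D_H f, D_H f^*]_H d\nu_\infty &= \frac12 \int_X |f|^{p-4}\big((p-1)|a|_H^2 + |b|_H^2\big) d\nu_\infty, \\
\mathrm{Im}\!\int_X [B D_H f, D_H f^*]_H d\nu_\infty &= p \int_X |f|^{p-4}\Big[\big(B + \tfrac12 I_H\big) b, a\Big]_H d\nu_\infty.
\end{align*}
Since $B + \tfrac12 I_H = \tfrac12(B - B^*)$ is skew-adjoint with $\|B - B^*\|_{\mathcal L(H)} = \gamma$, we bound $\big|[(B+\tfrac12 I_H)b, a]_H\big| \leq \tfrac{\gamma}{2}|a|_H |b|_H$ pointwise. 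The required inequality \eqref{moscone} then reduces to the pointwise elementary inequality, for nonnegative reals $\alpha = |a|_H$, $\beta = |b|_H$,
\begin{align*}
\frac{p\gamma}{2}\,\alpha\beta \leq \frac{C_{\theta_p}}{2}\big((p-1)\alpha^2 + \beta^2\big),
\end{align*}
integrated against $|f|^{p-4} d\nu_\infty$. Actually one must be slightly more careful: the sharp constant comes from optimizing, and the cotangent in \eqref{angolo_settore} is precisely $\mathrm{cotg}(\theta_p) = \sqrt{(p-2)^2 + p^2\gamma^2}/(2\sqrt{p-1})$, so I expect the genuine computation is to optimize the quadratic form $(p-1)\alpha^2 + \beta^2 - c\,p\gamma\,\alpha\beta \geq 0$ after accounting for the contribution of $\mathrm{Re}\int L_2 f\, f^*$ more carefully — this is exactly the computation done in \cite[Lemma 5]{CFMP05} and \cite[Theorem 3.4]{MV07}, which I would cite and adapt rather than redo from scratch. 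The point is that $\theta_p$ is chosen so that this quadratic-form inequality holds with equality in the worst direction.

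For $p \in (1,2)$ the explicit formulas and the core $\fcon_b^{2,1}(X;\C)$ are no longer directly available, so here I would use duality: the adjoint semigroup of $(T_p(t))_{t\geq0}$ is $(T_{p'}(t))_{t\geq0}$ on $\elle^{p'}(X,\nu_\infty)$ with $p' = p/(p-1) \in (2,+\infty)$, and analyticity on the sector $\Sigma_{\theta_p}$ is invariant under taking adjoints since $\theta_{p} = \theta_{p'}$ — inspection of \eqref{angolo_settore} shows $\mathrm{cotg}(\theta_p)$ is symmetric under $p \mapsto p'$ because $(p-2)^2 = (p')^2(p'-2)^2/(p'-1)^2 \cdot$ (one checks $\frac{(p-2)^2 + p^2\gamma^2}{p-1}$ is invariant under $p \leftrightarrow p'$ after clearing denominators). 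Hence from the $p' \geq 2$ case already established, $(T_{p'}(t))$ is analytic and contractive on $\Sigma_{\theta_{p'}} = \Sigma_{\theta_p}$, and a contraction analytic semigroup on a sector has an adjoint that is again a contraction analytic semigroup on the same sector; restricting to $\elle^p(X,\nu_\infty)$ (using consistency from Proposition \ref{generazione_smgr_Lp} and reflexivity) gives the claim. Alternatively, one invokes that $D(L_2)$ is a core for $L_p$ when $p \in (1,2)$, as announced in the introduction, and runs the same numerical-range estimate on $D(L_2)$ directly. The main obstacle I anticipate is not any single step but the bookkeeping in the $p \in (1,2)$ regime — making precise that the worst-case quadratic-form constant really is symmetric in $p \leftrightarrow p'$ and that the duality argument transfers sectoriality with the \emph{same} angle — together with justifying the core/approximation step that allows \eqref{moscone} to be checked only on smooth cylindrical functions.
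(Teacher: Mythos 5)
Your overall strategy (numerical range via Proposition \ref{vasca}, pointwise estimate on cylindrical functions, then approximation) is the same as the paper's, but your reduction for $p\geq 2$ rests on two claims that do not hold as stated. First, Proposition \ref{inc_dom_oper_Lp} gives $D(L_q)\subset D(L_p)$ for $q>p$, so for $p\geq 2$ the inclusion runs $D(L_p)\subset D(L_2)$, not $D(L_2)\subset D(L_p)$ as you write; in particular $\fcon_b^{2,1}(X;\C)\subset D(L_2)$ does not put these functions in $D(L_p)$ for $p>2$. Second, and more importantly, the assertion that $\fcon_b^{2,1}(X;\C)$ is a core for $L_p$ is exactly what is \emph{not} known here (the introduction states that no explicit core of $L_p$ is available), so the ``standard graph-closure argument'' has nothing to close against. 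The paper circumvents this by never approximating in the graph norm of $L_p$: for $f\in D(L_p^{\C})\subset D(L_2^{\C})\subset W^{1,2}_{H,\C}(X,\nu_\infty)$ one writes $\int_X L_pf\,f^*\,d\nu_\infty=\int_X L_2f\,f^*\,d\nu_\infty=\int_X[BD_Hf,D_Hf^*]_H\,d\nu_\infty$ directly from the Dirichlet form (using that $f^*$ is an admissible test function by Remark \ref{diff_dual_funct}), and then approximates $f$ by cylindrical functions only in the $W^{1,2}_H$ norm, with the uniform bound on $\|D_Hf_m^*\|_{\elle^{p'}_{\C}(X,\nu_\infty;H)}$ and the weak convergence $f_m^*\rightharpoonup f^*$ doing the work in \eqref{conv_parte_re}--\eqref{conv_parte_im}. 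Your pointwise quadratic-form computation leading to \eqref{stima_parte_im_parte_re} is correct and matches the paper, but the passage from cylindrical $f$ to general $f\in D(L_p^{\C})$ is the actual content of Step 1 and cannot be dismissed as routine.

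For $p\in(1,2)$ your primary route also contains an error: the adjoint of $(T_p(t))_{t\geq0}$ on $\elle^{p'}(X,\nu_\infty)$ is not $(T_{p'}(t))_{t\geq0}$ but the extension of $(T_2^*(t))_{t\geq0}$, i.e.\ the semigroup associated with the form $\widetilde{\mathcal E}$ built from $B^*$ rather than $B$. The argument is repairable --- $\|B^*-(B^*)^*\|_{\mathcal L(H)}=\gamma$ and your verification that $C_{\theta_p}=C_{\theta_{p'}}$ is correct, so Step 1 applied to the adjoint family yields the same sector --- but as written the duality claim is false. Your fallback (use that $D(L_2)$ is a core for $L_p$ when $p<2$, which follows from the invariance $T_p(t)D(L_2)\subset D(L_2)$ and density, and rerun the estimate on $D(L_2)$) is precisely what the paper does in Step 2, and is the cleaner route; there the limit passage again requires $f_n^*\rightharpoonup f^*$ in $\elle^{p'}(X,\nu_\infty)$ rather than any graph-norm density of smooth functions.
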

\begin{proof}
We show that Proposition \ref{vasca}$(ii)$ is satisfied with $\mathscr A=L_p$ and $\theta=\theta_p$. To begin with, the positivity of $(T_p(t))_{t\geq0}$ implies that $1\in \rho(L_p)$ for any $p\in(1,+\infty)$. At first we consider $p\in[2,+\infty)$ and then we deal with the case $p\in(1,2)$.

\vspace{2mm}
{\bf Step $1$}. Let $p\in[2,+\infty)$, let $f\in \fcon_{b}^{2,1}(X;\C)$ and let $f^*:=\overline f|f|^{p-2}\in C_b(X)$. Let us set
\begin{align*}
a:=|{\rm Re}(\overline {f} D_Hf)|_H, \quad 
b:=|{\rm Im}(\overline {f} D_Hf)|_H.
\end{align*}
From \eqref{parte_Re} we infer that
\begin{align}
\label{parte_Re_m}
-{\rm Re}[BD_Hf,D_H{f^*}]_H
= \frac12|f|^{p-4}\left((p-1)a^2+b^2\right).
\end{align}
Since $B+B^*=-I_H$ we easily get
\begin{align}
\label{stima_gamma}
\left|B+\frac12I_H\right|_{\mathcal L(H)}
= & \left|\frac12B-\frac12B^*\right|_{\mathcal L(H)}
= \frac14\gamma^2+\left(\frac12-\frac1p\right)^2,
\end{align}
where $\gamma$ has been introduced in \eqref{angolo_settore}. The Cauchy-Schwarz inequality and \eqref{parte_Im1} give
\begin{align}
\label{parte_Im_m}
|{\rm Im}[BD_Hf,D_H{f^*}]_H|
\leq &  |f|^{p-4}C_{\theta_p}ab\sqrt{p-1}.
\end{align}
Thanks to the Young's inequality $2ab\sqrt{p-1}\leq (p-1)a^2+b^2$ we deduce that
\begin{align}
\label{stima_parte_im_parte_re}
|{\rm Im}[BD_Hf,D_H{f^*}]_H|
\leq &  \frac12|f|^{p-4}C_{\theta_p}\left((p-1)a^2+b^2\right)
=-{\rm Re}[BD_Hf,D_H {f^*}]_H,
\end{align}
for any $f\in \fcon_b^{2,1}(X)$.

Let $f=u+iv\in D(L_p^{\C})$ and let us consider a sequence $(f_n:=u_n+iv_n)\subset \fcon_{b}^{2,1}(X;\C)$ such that $u_n\rightarrow u$ and $v_n\rightarrow v$ in $W^{1,2}_H(X,\nu_\infty)$, and $u_n\rightarrow u$ and $v_n\rightarrow v$ $\nu_\infty$-a.e. in $X$. These sequences exists thanks to Remark \ref{eq_chiusura_dom_C2}, to Proposition \ref{inc_dom_oper_Lp} and thanks to Remark \ref{estensione_ris}. 

From the definition of $f_m^*$, we have that $f_m^*\rightarrow f^*$ $\nu_\infty$-a.e. in $X$. Further, $\|f^*_m\|_{\elle^{p'}(X,\nu_\infty)}=\|f_m\|_{\elle^p(X,\nu_\infty)}$ is uniformly bounded with respect to $m\in\N$. Hence, there exists a function $g\in \elle^{p'}(X,\nu_\infty)$ such that, up to a subsequence which we still denote by $(f^*_m)$, $f^*_m\rightharpoonup g$ as $m\rightarrow+\infty$ in $\elle^{p'}(X,\nu_\infty)$. Since $f_m^*\rightarrow f^*$ $\nu_\infty$-a.e. in $X$, it follows that $g=f^*$ $\nu_\infty$-a.e. in $X$, i.e.,
\begin{align}
\label{conv_funz_compl}
\int_X hf_m^*d\nu_\infty\rightarrow \int_X hf^*d\nu_\infty, \quad n\rightarrow+\infty, \quad \forall h\in \elle^p(X,\nu_\infty).
\end{align}
From Remark \ref{diff_dual_funct} it follows that
\begin{align}
& \lim_{m\rightarrow+\infty}\int_X\left|{\rm Re}[BD_Hf_m,D_H{f_m^*}]_H
-{\rm Re}[BD_Hf,D_H{f_m^*}]_H\right|d\nu_\infty=0, \label{conv_parte_re}\\
& \lim_{m\rightarrow+\infty}\int_X\left|{\rm Im}[BD_Hf_m,D_H{f_m^*}]_H
-{\rm Im}[BD_Hf,D_H{f_m^*}]_H\right|d\nu_\infty=0.  \label{conv_parte_im}
\end{align}
Indeed,
\begin{align*}
&  \int_X\left|{\rm Re} [BD_Hf_m,D_H{f_m^*}]_H-{\rm Re}[BD_Hf,D_H{f_m^*}]_H\right|d\nu_\infty\\
&\quad \quad\quad \quad \quad  \leq  \|B\|_{\mathcal L(H)}\int_X|D_Hf-D_Hf_m|_H|D_Hf^*_m|_Hd\nu_\infty \\
&\quad \quad\quad \quad \quad \leq \|B\|_{\mathcal L(H)}\|D_Hf_m-D_Hf\|_{\elle^p_{\C}(X,\nu_\infty;H)}\|D_Hf^*_m\|_{\elle^{p'}_{\C}(X,\nu_\infty;H)}.
\end{align*}
We claim that $\|D_Hf^*_m\|_{\elle^{p'}_{\C}(X,\nu_\infty;H)}$ is uniformly bounded with respect to $m\in\N$. Indeed, for any $m\in\N$ we have
\begin{align*}
\|D_Hf^*_m\|_{\elle^{p'}_{\C}(X,\nu_\infty;H)}^{p'}
\leq 2^{p'-1}\left(\int_X|f_m|^{p'(p-2)}|D_H\overline {f_m}|^{p'}d\nu_\infty+
(p-2)\int_X|f_m|^{p'(p-2)}|D_Hf_m|_H^{p'}d\nu_\infty\right).
\end{align*}
We recall that $p'=\frac{p}{p-1}$. By applying the H\"older inequality with $q=p-1$ and $q'=\frac{p-1}{p-2}$, it follows that
\begin{align*}
\|D_Hf^*_m\|_{\elle^{p'}_{\C}(X,\nu_\infty;H)}^{p'}
\leq 2^{p'-1}(p-1)\|f_m\|_{\elle^p_{\C}(X,\nu_\infty)}^{1/q'}\|D_H{f_m}\|_{\elle^p_{\C}(X,\nu_\infty;H)}^{1/q}\leq c_p, \quad m\in\N,
\end{align*}
for some positive constant $c_p$, since both $\|f_m\|_{\elle^p_{\C}(X,\nu_\infty)}$ and $\|D_H{f_m}\|_{\elle^p_{\C}(X,\nu_\infty;H)}$ converge as $n\rightarrow+\infty$. Then, the claim is true and \eqref{conv_parte_re} and \eqref{conv_parte_im} follow from the fact that $D_Hf_m\rightarrow D_Hf$ in $W^{1,2}_{H,{\C}}(X,\nu_\infty)$ as $m\rightarrow+\infty$. Same arguments also work for \eqref{conv_parte_im}.

From Proposition \ref{inc_dom_oper_Lp}, \eqref{stima_parte_im_parte_re}, \eqref{conv_funz_compl}, \eqref{conv_parte_re} and \eqref{conv_parte_im} we get
\begin{align}
\notag
\left|{\rm Im}\left(\int_XL_p ff^* d\nu_\infty\right)\right|
= &\left|{\rm Im}\left(\int_XL_2 f f^* d\nu_\infty\right)\right| 
=  \lim_{m\rightarrow+\infty}\left|{\rm Im}\left(\int_XL_2 f f_m^* d\nu_\infty\right)\right|  \notag \\
= &  \lim_{m\rightarrow+\infty}\left|\left(\int_X{\rm Im}[BD_Hf,D_H {f^*_m}]_H d\nu_\infty\right)\right| \notag \\
= &\lim_{m\rightarrow+\infty}\left|\left(\int_X{\rm Im}[BD_Hf_m,D_H {f_m^*}]_H d\nu_\infty\right)\right| \notag\\
\leq & -C_{\theta_p} \lim_{m\rightarrow+\infty}\int_X{\rm Re}[BD_Hf_m,D_H {f_m^*}]_Hd\nu_\infty \notag \\ 
\leq & -C_{\theta_p} \lim_{m\rightarrow+\infty}\int_X{\rm Re}[BD_Hf,D_H {f_m^*}]_Hd\nu_\infty \notag \\ 
= & -C_{\theta_p}\lim_{m\rightarrow+\infty} {\rm Re}\left(\int_X L_2f f^*_m d\nu_\infty\right)
= -C_{\theta_p}{\rm Re}\left(\int_X L_2f f^* d\nu_\infty\right) \notag \\
= & -C_{\theta_p}{\rm Re}\left(\int_X L_pf f^* d\nu_\infty\right).\label{stima_L_p>2}
\end{align}
This shows that Proposition \ref{vasca}$(ii)$ holds true for any $f\in D(L_p^{\C})$, for any $p\in[2,+\infty)$. 

\vspace{2mm}
{\bf Step $2$}.
Let $p\in(1,2)$. We claim that $D(L_2^{\C})$ is a core for $D(L_p^{\C})$. Remark \ref{inc_dom_oper_Lp} with $q=2$ implies that $D(L_2^{\C})\subset D(L_p^{\C})$. From Step $1$, we know that $(T_2(t))_{t\geq0}$ is analytic in $\elle^2(X,\nu_\infty)$ and therefore $T(t)D(L_2)\subset D(L_2)$ for any $t\geq0$. Since $T_p(t)=T_2(t)$ on $\elle^2(X,\nu_\infty)$, we infer the $T_p(t)D(L_2)=T_2(t)D(L_2)\subset D(L_2)$. Moreover, $\fcon_b^{2,1}(X)\subset D(L_2)$. This implies that $D(L_2)$ is dense in $\elle^p(X,\nu_\infty)$. From \cite[Chapter 1, Proposition 1.7]{EN00} and Remark \ref{estensione_ris} we deduce that the claim is true.

Let $f\in D(L_p^{\C})$ and let $(f_n)\subset D(L_2^{\C})$ be a sequence which converges to $f$ in $D(L_p^{\C})$ as $n\rightarrow+\infty$ and $f_n\rightarrow f$ $\nu_\infty$-a.e. in $X$. As in \eqref{conv_funz_compl}, we can prove that, up to a subsequence, $f_n^*\rightharpoonup f^*$ as $n\rightarrow+\infty$ in $\elle^{p'}(X,\nu_\infty)$. Then, we have
\begin{align}
\label{passaggio_1<2}
\left|{\rm Im}\int_XL_pf f^*d\nu_\infty\right|
=\lim_{n\rightarrow+\infty} \left|{\rm Im}\int_XL_pf_n f_n^*d\nu_\infty\right|
= \lim_{n\rightarrow+\infty} \left|{\rm Im}\int_XL_2f_n f_n^*d\nu_\infty\right|
\end{align}
and the last equality follows from Proposition \ref{inc_dom_oper_Lp} with $q=2$. From \eqref{stima_L_p>2} with $p=2$ and $f$ replaced by $f_n$ we infer that
\begin{align}
\label{passaggio2<2}
\left|{\rm Im}\int_XL_2f_n f_n^*d\nu_\infty\right|
\leq -C_{\theta_p}{\rm Re}\left(\int_X L_2f_n f_n^* d\nu_\infty\right), \quad n\in\N.
\end{align}
Collecting \eqref{passaggio_1<2} and \eqref{passaggio2<2} we get
\begin{align*}
\left|{\rm Im}\int_XL_pf f^*d\nu_\infty\right|
\leq & -\lim_{n\rightarrow+\infty}C_{\theta_p}{\rm Re}\left(\int_X L_2f_n f_n^* d\nu_\infty\right)
=-\lim_{n\rightarrow+\infty}C_{\theta_p}{\rm Re}\left(\int_X L_pf_n f_n^* d\nu_\infty\right) \\
= & -C_{\theta_p}{\rm Re}\left(\int_X L_pf f^* d\nu_\infty\right).
\end{align*}
This implies that Proposition \ref{vasca}$(ii)$ is satisfied for $p\in(1,2)$.

\end{proof}

\section{Example}
\label{example}

We provide an example of operators $A$ and $Q$ which satisfy Hypotheses \ref{ipo_1}, \ref{portafoglio} and \ref{ipo_RKH}. Let $X:=\elle^2(0,1)$, let $A$ be the realization of the Laplace operator in $\elle^2(0,1)$ with Dirichlet boundary conditions and domain $W^{2,2}((0,1),d\xi)\cap W^{1,2}_0((0,1),d\xi)$, and let $Q:X\rightarrow X$ be the covariance operator of the Wiener measure on $X$, i.e.,
\begin{align}
\label{op_Q_ex}
Qf(x):=\int_0^1\min\{x,y\}f(y)dy, \quad x\in(0,1),
\end{align}
for any $f\in \elle^2(0,1)$ (see e.g. \cite{US95}). It is well known that $A$ is self-adjoint and that $e_k=\sqrt 2\sin(k\pi\cdot)$, $k\in\N$, is an orthonormal basis of $\elle^2((0,1),d\xi)$ of eigenvectors of $A$ with corresponding eigenvalues $\lambda_k=-k^2\pi^2$. We denote by $(e^{tA})_{t\geq0}$ the semigroup generated by $A$. $(e^{tA})_{t\geq0}$ is analytic on $\elle^2((0,1),d\xi)$ and $e^{tA}e_k=e^{-k^2\pi^2t}e_k$ for any $k\in\N$. Then, it is not hard to see that for any smooth function $f$ we have
\begin{align*}
(Qe^{sA}f)(x)
= & \sqrt 2\sum_{k=1}^\infty e^{-k^2\pi^2s}\langle f,\sqrt 2\sin(k\pi\cdot)\rangle_{\elle^2}\left(\frac1{k^2\pi^2}\sin(k\pi x)+\frac{(-1)^{k+1}}{k\pi}x\right).
\end{align*}
Moreover,
\begin{align*}
(e^{sA}Qe^{sA}f)(x)
= & \sqrt 2\sum_{k=1}^\infty e^{-2k^2\pi^2s}\langle f,\sqrt 2\sin(k\pi\cdot)\rangle_{\elle^2}\frac1{k^2\pi^2}\sin(k\pi x)\\
& +2\sum_{k,j=1}^\infty e^{-(k^2+j^2)\pi^2s}\langle f,\sqrt 2\sin(k\pi\cdot)\rangle_{\elle^2}\frac{(-1)^{k+1}}{k\pi}
\langle x, \sqrt 2\sin(j\pi\cdot)\rangle_{\elle^2}\sin(j\pi x) \\
= & \sqrt 2\sum_{k=1}^\infty e^{-2k^2\pi^2s}\langle f,\sqrt 2\sin(k\pi\cdot)\rangle_{\elle^2}\frac1{k^2\pi^2}\sin(k\pi x)\\
& +2\sqrt 2\sum_{k,j=1}^\infty e^{-(k^2+j^2)\pi^2s}\langle f,\sqrt 2\sin(k\pi\cdot)\rangle_{\elle^2}\frac{(-1)^{k+j+2}}{kj\pi^2}\sin(j\pi x).
\end{align*}
Integrating between $0$ and $t$ we get
\begin{align*}
(Q_t)f(x)
= & \sqrt 2\sum_{k=1}^\infty \langle f,e_k\rangle_{\elle^2}\frac{1-e^{-2k^2\pi^2t}}{2k^4\pi^4}\sin(k\pi x)\\
& +2\sqrt 2\sum_{k,j=1}^\infty\langle f,e_k\rangle_{\elle^2}\frac{(-1)^{k+j+2}(1- e^{-(k^2+j^2)\pi^2t})}{kj(k^2+j^2)\pi^4}\sin(j\pi x).
\end{align*}
\begin{pro}
$Q_t$ is a trace class operator for any $t>0$, $Q_t\rightarrow Q_\infty$ in the operator norm and $Q_\infty$ is a trace class operator, where
\begin{align}
Q_\infty f(x)
= & \sqrt 2\sum_{k=1}^\infty \langle f,e_k\rangle_{\elle^2}\frac{1}{2k^4\pi^4}\sin(k\pi x)
+2\sqrt 2\sum_{k,j=1}^\infty\langle f,e_k\rangle_{\elle^2}\frac{(-1)^{k+j+2}}{kj(k^2+j^2)\pi^4}\sin(j\pi x) \notag \\
= & \frac{3\sqrt 2}{2}\sum_{k=1}^\infty \langle f,e_k\rangle_{\elle^2}\frac{1}{2k^4\pi^4}\sin(k\pi x)
+2\sqrt 2\sum_{j\neq k}^\infty\langle f,e_k\rangle_{\elle^2}\frac{(-1)^{k+j+2}}{kj(k^2+j^2)\pi^4}\sin(j\pi x). \label{op_Q_infty_ex}
\end{align}
\end{pro}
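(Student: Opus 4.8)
The plan is to diagonalise everything in the orthonormal basis $\{e_k\}_{k\in\N}$ of $X=\elle^2(0,1)$, $e_k=\sqrt2\sin(k\pi\cdot)$. Since $\sqrt2\sin(k\pi x)=e_k(x)$, substituting $f=e_m$ into the explicit formula for $Q_tf$ displayed above and using $(-1)^{m+j+2}=(-1)^{m+j}$ gives, after collecting the diagonal contribution,
\begin{align*}
Q_te_m=\sum_{j=1}^\infty q^t_{jm}e_j,\quad q^t_{mm}=\frac{3\pa{1-e^{-2m^2\pi^2t}}}{2m^4\pi^4},\quad q^t_{jm}=\frac{2(-1)^{j+m}\pa{1-e^{-(j^2+m^2)\pi^2t}}}{jm(j^2+m^2)\pi^4}\ (j\neq m),
\end{align*}
for every $t>0$. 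Since $A$ is self-adjoint and $Q\geq0$ (Hypothesis \ref{ipo_1}), each operator $e^{sA}Qe^{sA}$ is nonnegative and symmetric, hence so is $Q_t=\int_0^te^{sA}Qe^{sA}\,ds$, and $Q_t-Q_s=\int_s^te^{uA}Qe^{uA}\,du\geq0$ whenever $0<s<t$; in particular $(Q_t)_{t>0}$ is nondecreasing in the order of selfadjoint operators.

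I would first settle the trace-class statement for $Q_t$. For a nonnegative operator, being trace class is equivalent to the finiteness of the sum of its diagonal entries in an orthonormal basis, and here
\begin{align*}
\sum_{k=1}^\infty q^t_{kk}=\sum_{k=1}^\infty\frac{3\pa{1-e^{-2k^2\pi^2t}}}{2k^4\pi^4}\leq\frac{3}{2\pi^4}\sum_{k=1}^\infty\frac1{k^4}=\frac1{60}
\end{align*}
uniformly in $t>0$; hence $Q_t$ is trace class with ${\rm Tr}[Q_t]\leq1/60$.

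Next, the norm convergence. Using the elementary bound $\norm{P}_{\mathcal L(X)}\leq{\rm Tr}[P]$ for nonnegative trace-class $P$, together with $Q_t-Q_s\geq0$, ${\rm Tr}[Q_t-Q_s]={\rm Tr}[Q_t]-{\rm Tr}[Q_s]$, and the fact that $t\mapsto{\rm Tr}[Q_t]$ is nondecreasing and bounded (hence Cauchy), we see that $(Q_t)_{t>0}$ is Cauchy in $\mathcal L(X)$ and therefore converges in operator norm to some nonnegative operator $Q_\infty$ which, being also the weak limit, is the one postulated in Hypothesis \ref{portafoglio}. Its diagonal entries are $q^\infty_{kk}:=\langle Q_\infty e_k,e_k\rangle_{\elle^2}=\lim_{t\to+\infty}q^t_{kk}=3/(2k^4\pi^4)$, so ${\rm Tr}[Q_\infty]=\sum_kq^\infty_{kk}=1/60<+\infty$ and $Q_\infty$ is trace class; moreover $Q_\infty\geq Q_t$, whence the quantitative estimate
\begin{align*}
\norm{Q_\infty-Q_t}_{\mathcal L(X)}\leq{\rm Tr}[Q_\infty-Q_t]={\rm Tr}[Q_\infty]-{\rm Tr}[Q_t]=\sum_{k=1}^\infty\frac{3e^{-2k^2\pi^2t}}{2k^4\pi^4}\longrightarrow0
\end{align*}
as $t\to+\infty$, by dominated convergence. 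Finally, to recover \eqref{op_Q_infty_ex}: by weak convergence $\langle Q_\infty e_m,e_j\rangle_{\elle^2}=\lim_{t\to+\infty}q^t_{jm}=:q^\infty_{jm}$, and using $(j^2+m^2)^2\geq m^4$ one checks $\sum_j(q^\infty_{jm})^2=O(m^{-6})$, so the series $\sum_jq^\infty_{jm}e_j$ converges in $X$ and equals $Q_\infty e_m$; expanding $Q_\infty f=\sum_m\langle f,e_m\rangle_{\elle^2}Q_\infty e_m$ and separating the term $j=m$ yields precisely \eqref{op_Q_infty_ex}.

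The only mildly delicate points are the bookkeeping required to pass from the displayed double series for $Q_tf$ to the matrix coefficients $q^t_{jm}$ (and the absolute convergence of those series, needed for the expressions to define bounded operators), together with the observation $Q_\infty-Q_t\geq0$, which is exactly what turns the operator-norm estimate into the elementary tail bound above; the remaining steps are routine.
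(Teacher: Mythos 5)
Your proposal is correct and rests on the same core computation as the paper's proof: reading off the diagonal matrix entries $\langle Q_te_k,e_k\rangle_{\elle^2}=\tfrac{3}{2}(1-e^{-2k^2\pi^2t})k^{-4}\pi^{-4}$ in the eigenbasis $e_k=\sqrt2\sin(k\pi\cdot)$ and summing a $\zeta(4)$-type series. The paper's proof consists of exactly those two trace computations and nothing else; in particular it never addresses the operator-norm convergence $Q_t\to Q_\infty$. Your argument supplies that missing piece cleanly: from $Q_t-Q_s=\int_s^te^{uA}Qe^{uA}\,du\geq0$ and the bound $\|P\|_{\mathcal L(X)}\leq{\rm Tr}[P]$ for nonnegative trace-class $P$, the monotone bounded function $t\mapsto{\rm Tr}[Q_t]$ forces $(Q_t)$ to be Cauchy in $\mathcal L(X)$, with the explicit rate ${\rm Tr}[Q_\infty]-{\rm Tr}[Q_t]=\sum_k\tfrac{3}{2}e^{-2k^2\pi^2t}k^{-4}\pi^{-4}$. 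This is a genuine improvement in completeness rather than a different method. Two small remarks: your constants are the correct ones (the paper's displayed sums carry a spurious $\sqrt2$ and the wrong exponent $-k^2\pi^2t$ in place of $-2k^2\pi^2t$, evidently typos), and for full rigor one should record the absolute convergence of the double series defining $Q_tf$ before rearranging it into the matrix coefficients $q^t_{jm}$ — you flag this, and it follows at once from $\frac{1}{kj(k^2+j^2)}\leq\frac{1}{2k^2j^2}$.
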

\begin{proof}
From the above computations we have
\begin{align*}
\sum_{k=1}^\infty\langle Q_te_k,e_k\rangle_{\elle^2}
= & \frac{3\sqrt 2}2\sum_{k=1}^\infty\frac{1-e^{-k^2\pi^2t}}{k^4\pi^4}<+\infty,
\end{align*}
and
\begin{align*}
\sum_{k=1}^\infty\langle Q_\infty e_k,e_k\rangle_{\elle^2}
= & \frac{3\sqrt2}2\sum_{k=1}^\infty\frac{1}{k^4\pi^4}<+\infty.
\end{align*}
\end{proof}

Finally, let us take $U:X\rightarrow \R$ defined by
\begin{align*}
U(f):=\int_0^1f(\xi)^2d\xi, \quad f\in X.
\end{align*}

From \cite[Subection 7.1]{CF16} we infer that $U\in W^{1,p}_H(X,\mu_\infty)$ for any $p\in(1,+\infty)$. Hence, the Ornstein-Uhlenbeck operator $L_p$ is sectorial in $\elle^p(\elle^{2}(0,1),e^{-U}\mu_\infty)$ for any $p\in(1,+\infty)$.


\end{document}